\documentclass[reqno]{amsart}

\usepackage{graphicx,subfigure,color}

\usepackage{relsize,exscale}

\numberwithin{equation}{section}

\usepackage[latin1]{inputenc}
\usepackage[english]{babel}

\usepackage{amsmath,amsthm,amsfonts,latexsym,amssymb}
\usepackage[colorlinks]{hyperref}
\hypersetup{linkcolor=blue,citecolor=blue,filecolor=black,urlcolor=blue}
\usepackage{comment}

\usepackage{color}
%



{ \theoremstyle{plain}
\newtheorem{theorem}{Theorem}[section]
\newtheorem{proposition}[theorem]{Proposition}
\newtheorem{lemma}[theorem]{Lemma}
\newtheorem{corollary}[theorem]{Corollary}
  \theoremstyle{remark}
\newtheorem{remark}[theorem]{Remark}
  \theoremstyle{definition}
\newtheorem{definition}[theorem]{Definition}
}



%
%

\begin{document}
\subjclass[2010]{35J92, 35B06, 35N25, 53A10, 35A23.}

\keywords{Alexandrov's Soap Bubble Theorem, Serrin-type result for overdetermined $p$-Laplacian problems, $p$-torsional problem, $P$-function, Radial symmetry results.}

\title[]{The Soap Bubble Theorem and a $p$-Laplacian overdetermined problem}

\author[F. Colasuonno]{Francesca Colasuonno}
\address{Francesca Colasuonno\newline\indent
Dipartimento di Matematica ``Giuseppe Peano''\newline\indent
Universit\`a degli Studi di Torino\newline\indent
Via Carlo Alberto, 10\newline\indent
10123 Torino, Italy
}
\email{francesca.colasuonno@unito.it}

\author[F. Ferrari]{Fausto Ferrari}
\address{
Fausto Ferrari
\newline \indent 
Dipartimento di Matematica\newline\indent
Alma Mater Studiorum Universit\`a di Bologna\newline\indent
piazza di Porta S. Donato, 5\newline\indent
40126 Bologna, Italy
}
\email{fausto.ferrari@unibo.it}

\date{\today}

\begin{abstract} 
We consider the $p$-Laplacian equation $-\Delta_p u=1$ for $1<p<2$, on a regular bounded domain $\Omega\subset\mathbb R^N$, with $N\ge2$, under homogeneous Dirichlet boundary conditions. In the spirit of Alexandrov's Soap Bubble Theorem and of Serrin's symmetry result for the overdetermined problems, we prove that if the mean curvature $H$ of $\partial\Omega$ is constant, then $\Omega$ is a ball and the unique solution of the Dirichlet $p$-Laplacian problem is radial. The main tools used are integral identities, the $P$-function, and the maximum principle. 
\end{abstract}

\maketitle

\section{Introduction}
The celebrated Alexandrov's Soap Bubble Theorem \cite{Alex}, dated back to 1958, states that if $\Gamma$ is a compact hypersurface, embedded in $\mathbb R^N$, having constant mean curvature, then $\Gamma$ is a sphere. On the other hand, Serrin's symmetry result (1971) \cite{Serrin} for the following overdetermined problem
\begin{equation}\label{P2}
-\Delta u=1\quad\mbox{in }\Omega,\qquad u=0\quad\mbox{on }\partial \Omega,\\
\end{equation}
\begin{equation}\label{over2}
u_\nu=c\quad\mbox{on }\partial\Omega, 
\end{equation}
where $\Omega\subset\mathbb R^N$ is a bounded domain and $u_\nu$ is the outer normal derivative, states that if \eqref{P2}--\eqref{over2} has a solution, then $\Omega$ must be a ball, and the unique solution $u$ must be radial. It is nowadays well-known that these two results are strictly related. Indeed, for his proof, Serrin adapted to the PDEs the reflection principle, a geometrical technique introduced by Alexandrov in \cite{Alex}, and combined it with the maximum principle, giving rise to a very powerful and versatile tool, the {\it moving plane method}. This method is still very much used, since it can be successfully applied to a large class of PDEs. Besides the common techniques used, the link between these two results has been further highlighted by Reilly in \cite{Reilly}, where the author proposed an alternative proof of the Soap Bubble Theorem, considering the hypersurface $\Gamma$ as a level set (i.e., $\partial\Omega$) of the solution of \eqref{P2}. For his proof, Reilly found and exploited a relation between the Laplacian operator and the geometrical concept of mean curvature. 
Interestingly enough, Serrin's result for the overdetermined problem has been proved via a different technique  by Weinberger in a two-page paper \cite{Wein} that was published in the same volume of the same journal as the paper by Serrin \cite{Serrin}. Weinberger's proof is much simpler, it relies on some integral indentities, the maximum principle, and the introduction of an auxiliary function, the so-called $P$-function. Even if Weinberger's technique is less flexible than the moving plane method, it lends itself well to being re-read in quantitative terms. Recently, Magnanini and Poggesi in \cite{MP2016,MP2} proved the stability both for the Alexandrov's Soap Bubble theorem and for Serrin's result, by estimating the terms involved in an integral identity proved in \cite{Wein} and refined in \cite{PaySch}. Also the moving plane method has been reformulated in a quantitative version to get the stability of both Serrin's result, cf. \cite{ABR}, and Alexandrov's Theorem, cf. \cite{CV2016}.
In those stability results, the idea is to measure how much $\Omega$ is close to being a ball by estimating from above the difference $r_e-r_i$ ($r_e$ and $r_i$ being the radii of two suitable balls such that $B_{r_e}\subset\Omega\subset B_{r_i}$) in terms of the deviation of the normal derivative $u_\nu$ from being constant on $\partial\Omega$, or in terms of the deviation of the mean curvature $H$ from being constant on $\Gamma$. Other stability issues for the Serrin problem have been treated in \cite{BNST}. 

Serrin's symmetry result has been extensively studied and generalized also to the case of quasilinear problems. For the $p$-Laplacian operator $\Delta_p u=\mathrm{div}(|\nabla u|^{p-2}\nabla u)$, $1<p<\infty$, it has been proved that if the following problem
\begin{equation}\label{overP}
\begin{cases}
-\Delta_p u=1\quad&\mbox{in }\Omega,\\
u=0&\mbox{on }\partial \Omega,\\
|\nabla u|=c&\mbox{on }\partial\Omega\quad\mbox{for some }c>0 
\end{cases}
\end{equation}
admits a weak solution in the bounded domain $\Omega\subset\mathbb R^N$, then $\Omega$ is a ball. Garofalo and Lewis \cite{GL1989} proved this result via Weinberger's approach; Brock and Henrot \cite{BH} proposed a different proof via Steiner symmetrization for $p\ge 2$; Damascelli and Pacella \cite{DP} succeeded in adapting the moving plane method to the case $1<p<2$. Later, many other refinements and generalizations to more general operators have been proposed, we refer for instance to \cite{FGK2006,FK2008,Brock} and the references therein. 

In this paper, we consider the following Dirichlet $p$-Laplacian problem 
\begin{equation}\label{P}
\begin{cases}
-\Delta_p u=1\quad&\mbox{in }\Omega,\\
u=0&\mbox{on }\partial \Omega,
\end{cases}
\end{equation}
for $1<p<2$. Here $\Omega\subset\mathbb{R}^N$ is a smooth bounded domain and $N\ge2$. Due to its physical meaning, \eqref{P} is often referred to as $p$-torsion problem.  For this problem, existence and uniqueness of the solution can be easily proved via the Direct Method of the Calculus of Variations and using the strict convexity of the action functional associated, see Section \ref{sec2}. In the spirit of Reilly's result, we regard the hypersurface $\Gamma$ of Alexandrov's Theorem as the level set $\partial\Omega$ of the solution of \eqref{P} and we obtain, for smooth hypersurfaces, an alternative proof of the Soap Bubble Theorem. As a consequence, we prove the equivalence of the Soap Bubble Theorem to the Serrin-type symmetry result for the overdetermined problem \eqref{overP}, when $1<p<2$. 
We state here our main results. 

\begin{theorem}\label{SBT} Let $\Gamma\subset\mathbb R^N$ be a $C^{2,\alpha}$ surface which is the boundary of a bounded domain $\Omega\subset\mathbb R^N$, i.e. $\Gamma=\partial\Omega$, and denote by $H=H(x)$ the mean curvature of $\partial\Omega$. Suppose that $1<p<2$, that $u$ solves \eqref{P}, and that the set of critical points of $u$ has zero measure. Then the following statements are equivalent:
\begin{enumerate}
\item[a.] $\Omega$ is a ball;
\item[b.] $|u_\nu(x)|^{p-2}u_\nu(x)=-\frac{1}{NH(x)}$ for every $x\in\partial\Omega$;
\item[c.] $u$ is radial;
\item[d.] $H(x)=H_0$ for every $x\in\partial\Omega$.
\end{enumerate}
Moreover, if one of the previous ones holds, then 
\begin{enumerate}
\item[e.] $|\nabla u(x)|=\left(\frac{1}{NH_0}\right)^{\frac{1}{p-1}}$ for every $x\in\partial\Omega$.
\end{enumerate}
\end{theorem}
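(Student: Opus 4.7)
I would place all the weight on the two nontrivial implications (b) $\Rightarrow$ (a) and (d) $\Rightarrow$ (a), since the remaining directions are essentially bookkeeping: (a) $\Rightarrow$ (c) follows from the uniqueness established in Section~\ref{sec2} combined with the rotational symmetry of the ball; (c) $\Rightarrow$ (a) is immediate since the zero set of a radial function in $\R^N$ is a sphere; and solving the radial ODE $-(r^{N-1}|u'|^{p-2}u')' = r^{N-1}$ on $B_R$ gives $|u'(R)| = (R/N)^{1/(p-1)}$, so that together with $H\equiv 1/R$ on $\partial B_R$, statements (b), (d), (e) all follow from (a).

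For (b) $\Rightarrow$ (a), I would use the $P$-function
\[
P \,:=\, \frac{p-1}{p}\,|\nabla u|^p \,+\, \frac{u}{N}.
\]
A Bochner/Bernstein-type computation based on $-\Delta_p u = 1$ and the Cauchy--Schwarz inequality for $D^2 u$ shows that, on the open set $\{\nabla u \ne 0\}$, $P$ is a weak subsolution of the linearization $\mathcal L$ of $\Delta_p$ at $u$. On $\partial\Omega$, where $u = 0$ yields the tangential identity $\Delta u|_{\partial\Omega} = u_{\nu\nu} + (N-1)Hu_\nu$ together with the pointwise expansion $\Delta_p u|_{\partial\Omega} = |\nabla u|^{p-2}\bigl[(p-1)u_{\nu\nu} + (N-1)Hu_\nu\bigr]$ (valid because $\nabla u = u_\nu\,\nu$ there), a short computation gives
\[
P_\nu\big|_{\partial\Omega} \,=\, (N-1)\,u_\nu\!\left[\,H|u_\nu|^{p-1} - \tfrac{1}{N}\,\right].
\]
Hypothesis (b) is precisely the vanishing of the bracket, i.e.\ $P_\nu \equiv 0$ on $\partial\Omega$. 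The strong maximum principle and the Hopf boundary lemma for $\mathcal L$ then force $P$ to be constant in $\Omega$; retracing the equality case of the Cauchy--Schwarz step recovers the paraboloid-type profile characteristic of the radial solution, and one concludes (a).

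For (d) $\Rightarrow$ (a), I would integrate the subsolution inequality $\mathcal L P \ge 0$ over $\Omega$. Since $\mathcal L$ is in divergence form and its co-normal derivative on $\partial\Omega$ reduces to $(p-1)|u_\nu|^{p-2}\,\partial_\nu(\cdot)$, the resulting boundary flux identity reads
\[
0 \,\le\, \int_\Omega \mathcal L P \,dx \,=\, (p-1)(N-1)\!\int_{\partial\Omega}\!|u_\nu|^{p-1}\!\left[\,\tfrac{1}{N} - H|u_\nu|^{p-1}\,\right] dS.
\]
Coupling this with the Cauchy--Schwarz inequality
\[
\int_{\partial\Omega}\! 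H|u_\nu|^{2(p-1)}\,dS \,\cdot\, \int_{\partial\Omega}\!\tfrac{1}{H}\,dS \,\ge\, \bigl(\textstyle\int_{\partial\Omega}|u_\nu|^{p-1}\,dS\bigr)^{\!2} \,=\, |\Omega|^{2},
\]
where the last equality is the divergence identity $\int_{\partial\Omega}|u_\nu|^{p-1}\,dS = |\Omega|$ coming from $-\Delta_p u = 1$, yields the Heintze--Karcher inequality $\int_{\partial\Omega}(1/H)\,dS \ge N|\Omega|$. Under (d), Minkowski's integral formula $|\partial\Omega| = \int_{\partial\Omega} H\,\langle x,\nu\rangle\,dS$ specializes to $|\partial\Omega| = N H_0|\Omega|$, which is exactly the equality case of Heintze--Karcher; its rigidity then gives (a). Statement (e) follows as a direct corollary of (a) via the explicit radial solution.

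The principal obstacle is the singular/degenerate character of the linearized operator $\mathcal L$ for $1 < p < 2$: its coefficients $|\nabla u|^{p-2}$ blow up at critical points of $u$. The hypothesis that the critical set has zero measure is essential to legitimize the integrations by parts above and the application of the strong maximum principle and Hopf lemma, typically via a regularization of the form $(|\nabla u|^2 + \varepsilon)^{(p-2)/2}$ and a careful passage to the limit $\varepsilon \to 0$. The standard $C^{1,\alpha}$ regularity of $u$ up to the boundary for $p$-Laplacian problems on $C^{2,\alpha}$ domains is used freely throughout.
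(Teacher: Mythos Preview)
Your argument for (b) $\Rightarrow$ (a) is correct and is essentially the paper's, only packaged differently. The paper also hinges on Lemma~\ref{Pnu>0} (the Hopf-type alternative for the $P$-function): it introduces the nonnegative ``Newton deficit'' $\mathcal I_p(u)$ of \eqref{integral}, proves the upper bound
\[
\mathcal I_p(u)\ \le\ -\tfrac{p(N-1)}{p-1}\int_{\partial\Omega}\!\Bigl(\tfrac{1}{N}u_\nu+H|u_\nu|^p\Bigr)\,d\sigma
\]
(Theorem~\ref{th:main}(ii)), and then observes that (b) annihilates the right-hand side, forcing $\mathcal I_p(u)=0$ and hence radiality via Lemma~\ref{onlyrad}. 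But the proof of that upper bound, and of Lemma~\ref{onlyrad}(ii), is exactly your dichotomy ``$P_\nu>0$ on $\partial\Omega$ versus $P$ constant''; the $\mathcal I_p$ functional is just a convenient bookkeeping device for the equality case of Newton's inequality.

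For (d) $\Rightarrow$ (a) your route is genuinely different. The paper does \emph{not} pass through Heintze--Karcher; instead it upgrades the boundary inequality above to
\[
\mathcal I_p(u)\ \le\ \tfrac{p(N-1)}{p-1}\int_{\partial\Omega}|u_\nu|^p\,(H_0-H)\,d\sigma
\]
via a single application of H\"older's inequality to the divergence identity $\int_{\partial\Omega}|u_\nu|^{p-1}=|\Omega|$ (Theorem~\ref{th:main}(iii)). Under (d) the right-hand side vanishes, so again $\mathcal I_p(u)=0$ and Lemma~\ref{onlyrad} gives radiality directly. Your approach is elegant---and incidentally yields a $p$-Laplacian proof of Heintze--Karcher---but it outsources the crucial rigidity step to an external theorem of the same depth as Alexandrov's Soap Bubble Theorem, which is precisely what the paper sets out to reprove. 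If you want to make your route self-contained you would have to trace the equality case of your own chain: equality in Cauchy--Schwarz together with $H\equiv H_0$ gives $|u_\nu|$ constant on $\partial\Omega$, and equality in $\int_\Omega\mathcal L P\ge 0$ forces the pointwise equality in the Bochner/Newton step; unwinding the latter is exactly Lemma~\ref{onlyrad}. One technical caution: the subsolution property you invoke for $P$ is usually stated with a first-order drift term (cf.\ the proof of Lemma~\ref{Pnu>0} and \cite{FGK2006}); this is harmless for the strong maximum principle and Hopf lemma, but your integrated flux identity $\int_\Omega\mathcal L P=(p-1)\int_{\partial\Omega}|u_\nu|^{p-2}P_\nu$ requires the pure divergence-form inequality $\mathrm{div}(A\nabla P)\ge 0$, so you should verify (or cite) that no drift appears when $-\Delta_p u$ is constant.
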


The implication d. $\Rightarrow$ a. in the previous theorem is a special case of the Soap Bubble Theorem of Alexandrov. We further observe that from the proof of the previous theorem, cf. formula \eqref{H0R0}, it results that if d. holds, then $\Omega$ must be a ball of radius $R_0=1/H_0$. Moreover, the fact that any of the statements a., b., c., or  d. implies e. is a simple consequence of the previous results, but we know that the converse implication e. $\Rightarrow$ a. holds as well: as proved in \cite{GL1989,FGK2006,FK2008}, the overdetermined problem \eqref{overP} admits a solution only if $\Omega$ is a ball of radius $R_0$. This allows us to state the equivalence of the Soap Bubble Theorem and of the Serrin-type result for the overdetermined $p$-Laplacian problem \eqref{overP} under suitable regularity assumptions, in the case $1<p<2$. 

\begin{corollary}\label{Alex-p-Serrin} Under the assumptions of Theorem \ref{SBT}, statements a., b., c., d., and e. are all equivalent.
\end{corollary}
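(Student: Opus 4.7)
The plan is to combine Theorem \ref{SBT} with the already existing Serrin-type symmetry results for the $p$-Laplacian overdetermined problem. Theorem \ref{SBT} directly provides the chain of equivalences a. $\Leftrightarrow$ b. $\Leftrightarrow$ c. $\Leftrightarrow$ d., together with the four implications (any of a.--d.) $\Rightarrow$ e. Hence, to promote e. into the equivalence class, the only implication left is e. $\Rightarrow$ a. (or equivalently, e. $\Rightarrow$ any one of b., c., d.), and once this link is established all five statements coincide.

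To close the loop, I would argue as follows. Assume that e. holds. The formula in e. presupposes $H_0>0$, so the constant $c:=\left(\tfrac{1}{NH_0}\right)^{1/(p-1)}$ is well defined and strictly positive; by e., $|\nabla u|\equiv c$ on $\partial\Omega$. Combined with \eqref{P}, this shows that $u$ is a weak solution of the overdetermined system \eqref{overP}. At this point I invoke the Serrin-type symmetry result for the $p$-torsion problem, which in the range $1<p<2$ is available through the works of Garofalo and Lewis \cite{GL1989} and the subsequent refinements in \cite{FGK2006,FK2008}: the mere existence of a solution of \eqref{overP} forces $\Omega$ to be a ball (of radius $R_0$), which is precisely statement a.

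There is essentially no technical obstacle: the corollary is a one-step consequence of Theorem \ref{SBT} and of the quoted Serrin-type theorems. The only minor point worth checking is the strict positivity of $c$, and hence the legitimacy of applying the overdetermined-problem result, which follows at once from the explicit form given in e.\ together with $H_0>0$ (a fact also consistent with the relation $R_0=1/H_0$ noted after Theorem \ref{SBT}).
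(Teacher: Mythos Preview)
Your proposal is correct and follows essentially the same route as the paper: the equivalence of a.--d. and the implications to e. come from Theorem~\ref{SBT}, while the remaining implication e.\ $\Rightarrow$ a.\ is obtained by invoking the Serrin-type symmetry results of \cite{GL1989,FGK2006,FK2008} for the overdetermined problem \eqref{overP}. The paper in fact does not give a separate proof of the corollary; the argument is sketched in the paragraph preceding its statement, and it coincides with yours.
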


Our proof technique takes inspiration from \cite{MP2016} and follows the approach of Weinberger. After having introduced the $P$-function \eqref{Pfunction} in terms of the solution of \eqref{P}, we derive the integral identity \eqref{identity} using the Divergence Theorem. The identity \eqref{identity} will be a key tool for the estimates in the rest of the paper. We recall then that the $p$-Laplacian of a smooth function can be expressed as the trace of a matrix-operator applied to the same function, cf. \eqref{p-lapl_traccia}, and we use a simple algebraic inequality \eqref{ineqFF} (known as Newton's inequality) to get an estimate of the $p$-Laplacian of a function. This suggests us to introduce in \eqref{integral} the integral $\mathcal I_p(u)$ which will play the role of the so-called Cauchy-Schwartz deficit in \cite{MP2016} for the linear case $p=2$. In view of Newton's inequality, the integral $\mathcal I_p(u)$ has a sign, it is always non-negative. Now the $P$-function comes into play: thanks to the fact that it satisfies a maximum principle, we can prove that, when $1<p<2$, $\mathcal I_p(u)$ vanishes only on radial solutions of \eqref{P}, cf. Lemma \ref{onlyrad}. This, combined with the integral identity \eqref{identity}, allows us to obtain an estimate from above of $\mathcal I_p(u)$ in terms of some boundary integrals involving only the mean curvature $H$ and the normal derivative $u_\nu$, see Theorem \ref{th:main}. Then Theorem \ref{SBT} and Corollary \ref{Alex-p-Serrin} are easy consequences: $\mathcal I_p(u)$ is zero (or equivalently the solution of \eqref{P} is radial) if and only if the mean curvature $H$ is constant on $\partial\Omega$ or the modulus of the gradient of $u$ is constant on $\partial\Omega$. Finally, in Corollary \ref{I-H}, we give an estimate from above of the integral $\mathcal I_p(u)$ in terms of the $L^1(\partial\Omega)$-norm of the deviation of $H$ from being constant and some constants which only depend on the geometry of the problem, cf. \eqref{IpH}.

The paper is organized as follows: in Section \ref{sec2} we introduce some useful notation, the $P$-function, some known results, and some preliminary lemmas. In Section \ref{sec3} we prove Theorem \ref{SBT} and its consequences, while in Section \ref{sec4}, we present some comments on the stability for the $p$-overdetermined problem.

\section{Preliminaries}\label{sec2}
We first introduce the main important quantities and notation involved. Throughout the paper, with abuse of notation, we use the symbol $|\cdot|$ to denote both the $N$-dimensional and the $(N-1)$-dimensional Lebesgue measures. We further denote by $\|\cdot\|$ the Frobenius matrix norm and by $\langle\cdot,\cdot\rangle$ the scalar product in $\mathbb R^N$.  \smallskip

\noindent {\bf The $p$-Laplacian on non-critical level sets of $u$.}
The $p$-Laplacian of a regular function $v$ can be expressed as follows
\begin{equation}\label{plapl}
\Delta_p v=|\nabla v|^{p-2}\left(\Delta v+(p-2)\frac{\langle D^2v\nabla v,\nabla v\rangle}{|\nabla v|^2}\right),
\end{equation}
where $D^2v$ denotes the Hessian matrix of $v$. Moreover, we recall that, in view of \eqref{plapl}, it is possible to express the $p$-Laplacian of any $C^2$-function $v$ as follows 
\begin{equation}\label{p-lapl_traccia}
\begin{aligned}
{\Delta_p v}&=|\nabla v|^{p-2}\left(\Delta v+(p-2)\langle D^2v\frac{\nabla v}{|\nabla v|},\frac{\nabla v}{|\nabla v|}\rangle\right)\\
&=|\nabla v|^{p-2}\left(\mathrm{Tr}(D^2 v)+\frac{p-2}{|\nabla v|^2}\sum_{i,j=1}^N \frac{\partial^2 v}{\partial x_i\partial x_j} \frac{\partial v}{\partial x_i}\frac{\partial v}{\partial x_j}\right)\\
&=|\nabla v|^{p-2}\left[\mathrm{Tr}(D^2 v)+(p-2)\mathrm{Tr}\left(\frac{\nabla v}{|\nabla v|}\otimes \frac{\nabla v}{|\nabla v|}\cdot D^2v\right)\right]\\
&=\mathrm{Tr}\left[|\nabla v|^{p-2}\left(\mathrm{I}+(p-2)\frac{\nabla v}{|\nabla v|}\otimes\frac{\nabla v}{|\nabla v|}\right)D^2v\right],
\end{aligned}
\end{equation}
where we have denoted simply by $\mathrm{I}$ the $N\times N$ identity matrix.
\smallskip

Let $u$ be a solution of \eqref{P}. We denote by $\nu$ the following vector field 
$$\nu=-\frac{\nabla u}{|\nabla u|},$$
which coincides with the external unit normal on $\partial \Omega$, being $u_{|_{\partial\Omega}}$ constant.
The mean curvature of the regular level sets of $u$ is given by 
$$H=-\frac1{N-1}\mathrm{div}\frac{\nabla u}{|\nabla u|}.$$

It is possible to see that, on non-critical level sets of $u$, the Laplacian of $u$ can be expressed in terms of $H$ as follows
\begin{equation}\label{Deltau}
\Delta u=u_{\nu\nu}+(N-1)Hu_\nu,
\end{equation}
where $u_{\nu}=\nabla u\cdot\nu=-|\nabla u|$ and $u_{\nu\nu}=\langle D^2u \,\nu,\nu\rangle$. Therefore, on non-critical level sets of $u$, we can write the $p$-Laplacian as
\begin{equation}\label{Deltapu}
\Delta_p u=|u_\nu|^{p-2}\left[(p-1)u_{\nu\nu}+(N-1)H u_\nu\right].
\end{equation}

\smallskip

\noindent {\bf The $P$-function.}
In terms of a solution $u$ of \eqref{P}, we can define the so-called $P$-function as
\begin{equation}\label{Pfunction}
P:=\frac{2(p-1)}{p}|\nabla u|^p+\frac{2}{N}u\quad\mbox{a.e. in }\Omega,
\end{equation}
we refer to \cite[Chapter 7, formula (7.6) with $v(q)=q^{\frac{p-2}{2}}$ and $q=|\nabla u|^2$]{S1981} for its derivation. The main feature of $P$ is that it satisfies a maximum principle, which is the starting point for finding useful bounds for the main quantities involved in this problem.

\begin{definition}
Let $\Omega\subset \mathbb R^N$ be a bounded domain. $\Omega$ satisfies the {\it interior sphere condition} if for every $x\in\partial\Omega$ there exist $x_0\in\Omega$ and $r>0$ such that $B_r(x_0):=\{y\in\mathbb R^N\,:\, |y-x_0|<r\}\subset\Omega$ and $x\in\partial B_r(x_0)$.
\end{definition}

We recall that if $\Omega$ is a $C^2$ bounded domain, then it satifies the interior sphere condition. 

\begin{lemma}\label{Pnu>0} Let $\Omega$ be of class $C^{1,\alpha}$ and satisfy the interior sphere condition.
If $u$ solves \eqref{P}, then $P$ is either constant in $\bar\Omega$ or it satisfies $P_\nu>0$ on $\partial\Omega$.
\end{lemma}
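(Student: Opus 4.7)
The plan follows the classical $P$-function technique, in the nonlinear variant developed by Payne--Philippin and systematized by Sperb. The two main ingredients will be (i) a differential inequality of subsolution type for $P$ relative to an appropriate linearization of $-\Delta_p$, and (ii) the strong maximum principle combined with Hopf's boundary-point lemma, applied via the given interior sphere condition.

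Concretely, I would first restrict to the open set $\Omega^{*}:=\{x\in\Omega:\nabla u(x)\neq 0\}$, on which the natural second-order operator
\[
\mathcal{L}w:=|\nabla u|^{p-2}\Big[\Delta w+(p-2)\frac{\langle D^{2}w\,\nabla u,\nabla u\rangle}{|\nabla u|^{2}}\Big]
\]
(the linearization of $-\Delta_p$ around $u$, cf.\ \eqref{p-lapl_traccia}) is strictly elliptic. Differentiating the equation $-\Delta_p u=1$ once and twice, substituting into $\mathcal{L}P$ computed directly from the definition \eqref{Pfunction}, and invoking a Newton-type algebraic inequality on the Hessian of $u$, one obtains after algebraic manipulation a pointwise inequality
\[
\mathcal{L}P+\langle b(x),\nabla P\rangle\ge 0\quad\text{in }\Omega^{*},
\]
where $b$ is a locally bounded vector field built from the first derivatives of $u$; this is essentially formula (7.6) in Sperb's book, adapted to the present normalization of $P$.

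Once $P$ is recognized as a subsolution of a linear, locally uniformly elliptic operator on $\Omega^{*}$, the strong maximum principle yields the dichotomy: either $P$ is constant on a connected component of $\Omega^{*}$, or it cannot attain an interior maximum there. Combined with continuity of $P$ on $\bar\Omega$ and the fact that on the critical set $P$ reduces to $\tfrac{2}{N}u\le \tfrac{2}{N}\|u\|_\infty$, this forces the global maximum of $P$ over $\bar\Omega$ to lie on $\partial\Omega$. Then, at every $x_0\in\partial\Omega$, the interior sphere condition permits the application of Hopf's boundary-point lemma to the elliptic operator $\mathcal{L}+\langle b,\nabla\cdot\rangle$: either $P$ is constant in the interior sphere at $x_0$---in which case the strong maximum principle together with a connectedness argument forces $P$ constant on all of $\bar\Omega$---or $P_\nu(x_0)>0$. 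Since $x_0$ is arbitrary, this yields the stated global dichotomy.

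The main obstacle will be the derivation of the subsolution inequality $\mathcal{L}P+\langle b,\nabla P\rangle\ge 0$, which requires a careful Bochner-type computation for $|\nabla u|^p$ together with the pointwise algebraic estimate on the Hessian, and is complicated by the degeneracy of $\mathcal{L}$ at critical points of $u$. A secondary technical point is ensuring that Hopf's lemma applies uniformly up to $\partial\Omega$: one needs $|\nabla u|>0$ on $\partial\Omega$ so that $\mathcal{L}$ stays uniformly elliptic near the boundary, which in turn follows from the classical Hopf lemma applied to $u$ itself as a solution of $-\Delta_p u=1$ on a domain satisfying the interior sphere condition.
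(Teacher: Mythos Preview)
Your approach matches the paper's: a subsolution inequality for $P$ with respect to a uniformly elliptic operator on the non-critical set, the strong maximum principle, Hopf's boundary lemma, and the observation that $|\nabla u|>0$ in a neighborhood of $\partial\Omega$ so that the operator is uniformly elliptic there. The paper does not carry out the differential-inequality computation itself but delegates it (and the maximum-principle step) to \cite[Lemma~3.2]{FGK2006}; you instead sketch the mechanism directly.

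One step in your sketch does not go through as written. From ``$P$ has no interior maximum in $\Omega^{*}$'' together with ``on the critical set $P=\tfrac{2}{N}u\le\tfrac{2}{N}\|u\|_\infty$'' it does \emph{not} follow that the global maximum of $P$ over $\bar\Omega$ lies on $\partial\Omega$: the bound $\tfrac{2}{N}\|u\|_\infty$ gives no comparison with $\max_{\partial\Omega}P=\tfrac{2(p-1)}{p}\max_{\partial\Omega}|\nabla u|^{p}$, and in fact the maximum of $u$ is attained precisely at a critical point. The paper (following \cite{FGK2006}) argues differently: if $P$ is not constant, any interior maximizer $\bar x$ of $P$ must satisfy $\nabla u(\bar x)=0$; since a closed tubular neighborhood $D$ of $\partial\Omega$ is free of critical points, the maximum of $P$ restricted to $D$ is attained only on $\partial\Omega$, and Hopf's lemma is then applied to the uniformly elliptic equation satisfied by $P$ in $D$. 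You should replace your critical-set bound with this localization-to-$D$ argument.
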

\begin{proof} The proof of this lemma is given in \cite[Lemma 3.2]{FGK2006} for a solution of the overdetermined problem \eqref{overP}; we report the outline of the proof here in order to highlight that it continues to hold even if $u$ does not satisfy $|\nabla u|=\mathrm{const.}$ on $\partial\Omega$.

Since $u$ solves \eqref{P}, then by \cite[Theorem 3.2.2]{PucciSerrin}, $u\ge 0$ a.e. in $\Omega$ and by \cite[Theorem 1]{Lieberman}, $u$ is of class $C^{1,\alpha}(\bar\Omega)$. Now, \cite[Theorem 5]{Vazquez} guarantees that $|\nabla u|\ge \max_{\partial\Omega}|\nabla u|>0$ on $\partial\Omega$. By continuity, $|\nabla u|\not= 0$ in a closed neighborhood $D\subset\bar\Omega$ of $\partial\Omega$.

Now, suppose that $P$ is not constant in $\bar \Omega$. Under this assumption, as in \cite[Lemma~3.2 - Claim -  Step 2]{FGK2006}, it is possible to prove that $P$ attains its maximum on $\partial \Omega$ and that, if $P$ also attains its maximum at a point $\bar x\in\Omega$, then necessarily $\nabla u(\bar x)=0$. 
Therefore, being $D\subset\bar\Omega$ a closed neighborhood of $\partial\Omega$, $P$ attains its maximum in $D$ only on $\partial\Omega$. By the proof of \cite[Lemma 3.2]{FGK2006}, we know that $P$ satisfies in $D$ a uniformly elliptic equation and so it satisfies the classical Hopf's lemma. Hence, $P_\nu>0$ on $\partial\Omega$.
\end{proof}

For future use, we derive here an easy identity holding true for any $u$ solution of \eqref{P}. By integration by parts, the Divergence Theorem, and \eqref{Deltau} we get
\begin{equation}\label{id1}
\begin{aligned}
\int_\Omega\langle|\nabla u|^{p-2}\nabla u,\nabla \Delta u\rangle dx &=-\int_\Omega \Delta_p u\Delta u dx+\int_{\partial \Omega}\Delta u|\nabla u|^{p-2}\nabla u\cdot\nu d\sigma\\
&=\int_\Omega\Delta u dx+\int_{\partial\Omega}\Delta u |u_\nu|^{p-2}u_\nu d\sigma\\
&=\int_{\partial\Omega}u_\nu d\sigma +\int_{\partial \Omega}|u_\nu|^{p-2}u_\nu[u_{\nu\nu}+(N-1)Hu_\nu]d\sigma\\
&=\int_{\partial\Omega}u_\nu d\sigma -\int_{\partial \Omega}|u_\nu|^{p-1}u_{\nu\nu}d\sigma+(N-1)\int_{\partial\Omega}H|u_\nu|^p d\sigma,
\end{aligned}
\end{equation}
where we used that $\partial\Omega$ is a non-critical level set of $u$, as showed in the proof of Lemma \ref{Pnu>0}.\smallskip 

\noindent {\bf Reference constant mean curvature and reference domain.} We introduce here some reference geometric constants which are related to problem \eqref{P}.
These constants will be useful to compare problem \eqref{P} with the same problem set in a ball instead of a general domain $\Omega$. 

By Minkowski's identity, i.e., 
$$
\int_{\partial\Omega}H(x)\langle x-z,\nu(x)\rangle d\sigma=|\partial\Omega|\quad\mbox{for any }z\in\mathbb R^N,
$$
we get,  by the Divergence Theorem and if $H$ is constant:
$$
|\partial\Omega|=H\int_{\partial\Omega}\langle x-z,\nu(x)\rangle d\sigma=\int_\Omega\sum_{i=1}^N\frac{\partial (x-z)}{\partial x_i} dx=H|\Omega| N.
$$
If $H$ is not constant, we can take as reference constant mean curvature the quantity 
$$H_0:=\frac{|\partial\Omega|}{N|\Omega|}$$
and, as reference domain, a ball of radius 
$$R_0=\frac{1}{H_0}=\frac{N|\Omega|}{|\partial\Omega|}.$$
\smallskip 

\noindent {\bf Existence and uniqueness for \eqref{P}.}
Problem \eqref{P} has a variational structure with associated action functional $I:W^{1,p}_0(\Omega)\to\mathbb R$ given by 
$$
I(u):=\int_\Omega\left (\frac{1}{p}|\nabla u|^p - u \right) dx.
$$
By strict convexity and the Direct Method of Calculus of Variations, it is possible to prove that $I$ has a unique minimizer. Hence, \eqref{P} has a unique weak solution $u\in W^{1,p}_0(\Omega)$. 

From now on in the paper, we denote by $\mathcal C$ the critical set of the solution $u$ of problem \eqref{P}, namely 
$$\mathcal C:=\{x\in\Omega\,:\,|\nabla u (x)|=0\}.$$

By \cite[Lemma 3.1]{FGK2006}, we know that the solution $u$ of \eqref{P} is of class $C^{2,\alpha}(\bar\Omega\setminus \mathcal C)$.

Therefore, hereafter we assume that $\Omega$ is of class $C^{2,\alpha}$ in order to guarantee that the solution $u$ of \eqref{P} is of class $C^{2,\alpha}$ in a neighborhood of $\partial\Omega$ (this is a consequence of the regularity of $u$ and of the first part of the proof of Lemma \ref{Pnu>0}).

\begin{lemma}\label{Identity-Pnu>0} 
Let $u$ solve \eqref{P} and suppose that its critical set $\mathcal C$ has zero $N$-dimensional measure. The following identity holds
\begin{equation}\label{identity}
\begin{aligned}
\int_\Omega&\left\{(p-1)|\nabla u|^{p-2}\left[(p-2)\left\|D^2u\frac{\nabla u}{|\nabla u|}\right\|^2+\|D^2u\|^2+\langle\nabla u,\nabla \Delta u\rangle\right]+\frac{\Delta u}{N}\right\}dx\\
&\hspace{5cm}=-\int_{\partial\Omega}(N-1)\left(\frac{1}{N}u_\nu+H|u_\nu|^p\right)d\sigma
\end{aligned}
\end{equation}
\end{lemma}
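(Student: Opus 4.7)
The plan is to combine the classical Bochner--Weitzenb\"ock identity
\begin{equation*}
\tfrac{1}{2}\Delta |\nabla u|^2 = \|D^2u\|^2 + \langle \nabla u, \nabla \Delta u\rangle
\end{equation*}
with a weighted integration by parts. First I would multiply by $|\nabla u|^{p-2}$ and integrate over $\Omega$; applying Green's formula on the $\tfrac12\Delta|\nabla u|^2$ side and using
\begin{equation*}
\tfrac{1}{2}\,\nabla |\nabla u|^{p-2}\cdot \nabla |\nabla u|^{2} \;=\; (p-2)\,|\nabla u|^{p-2}\left\|D^2 u\,\tfrac{\nabla u}{|\nabla u|}\right\|^{2},
\end{equation*}
the bulk recombines exactly into the integrand $|\nabla u|^{p-2}\bigl[(p-2)\|D^2u\tfrac{\nabla u}{|\nabla u|}\|^{2}+\|D^2u\|^{2}+\langle \nabla u,\nabla \Delta u\rangle\bigr]$, while the only remaining boundary contribution is $\tfrac{1}{2}\int_{\partial\Omega}|\nabla u|^{p-2}\partial_\nu|\nabla u|^2\,d\sigma$.

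Next I would evaluate that boundary term using that $u\equiv 0$ on $\partial\Omega$ forces $\nabla u = u_\nu \nu$, hence $\partial_\nu|\nabla u|^{2}=2u_\nu u_{\nu\nu}$; since $u_\nu<0$ on $\partial\Omega$ by the Hopf-type bound already recalled in the proof of Lemma~\ref{Pnu>0}, the boundary contribution simplifies to $-\int_{\partial\Omega}|u_\nu|^{p-1}u_{\nu\nu}\,d\sigma$. I would then multiply the whole equality by $(p-1)$ and eliminate $u_{\nu\nu}$ by invoking \eqref{Deltapu} together with the boundary equation $-\Delta_p u=1$, which yields the pointwise relation $(p-1)|u_\nu|^{p-1}u_{\nu\nu}=u_\nu+(N-1)H|u_\nu|^{p}$ on $\partial\Omega$. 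The right-hand side thus becomes $-\int_{\partial\Omega}u_\nu\,d\sigma-(N-1)\int_{\partial\Omega}H|u_\nu|^{p}\,d\sigma$.

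Finally, I would add to both sides the trivial consequence $\int_\Omega \frac{\Delta u}{N}\,dx = \frac{1}{N}\int_{\partial\Omega} u_\nu\,d\sigma$ of the Divergence Theorem; on the right, the resulting $\frac{1}{N}\int_{\partial\Omega}u_\nu$ combines with $-\int_{\partial\Omega}u_\nu$ to give $-\tfrac{N-1}{N}\int_{\partial\Omega}u_\nu\,d\sigma$, which packages with the mean-curvature term into $-(N-1)\int_{\partial\Omega}(\tfrac{u_\nu}{N}+H|u_\nu|^{p})\,d\sigma$, producing \eqref{identity} exactly.

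The main obstacle is that for $1<p<2$ the weight $|\nabla u|^{p-2}$ is singular on the critical set $\mathcal{C}$, so the integration by parts above is not \emph{a priori} legitimate on $\Omega$. I would circumvent this by running the entire argument on $\Omega_\varepsilon:=\{x\in\Omega : |\nabla u(x)|>\varepsilon\}$, where $u\in C^{2,\alpha}$ and every manipulation is classical, and then letting $\varepsilon\to 0^+$. Since $|\mathcal{C}|=0$ by assumption, the bulk integrands (which are implicitly integrable in the statement) pass to the limit by dominated convergence; the delicate point is to show that the spurious boundary contribution along the inner level set $\{|\nabla u|=\varepsilon\}$ vanishes in the limit, which one can hope to achieve through the coarea formula applied to $|\nabla u|$ together with the natural control on $\|D^2u\|$ inherited from the equation $-\Delta_p u=1$ away from $\mathcal{C}$.
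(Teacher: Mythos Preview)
Your proposal is correct and essentially follows the paper's approach. The paper's proof is simply the Divergence Theorem applied to the $P$-function, $\int_\Omega \Delta P\,dx=\int_{\partial\Omega}P_\nu\,d\sigma$, with the expressions for $\Delta P$ and $P_\nu$ quoted from Sperb's book (formulas (7.7) and (7.9)); what you do is unpack those same formulas by hand via the Bochner--Weitzenb\"ock identity, the weighted integration by parts with weight $|\nabla u|^{p-2}$, and the boundary relation $(p-1)|u_\nu|^{p-1}u_{\nu\nu}=u_\nu+(N-1)H|u_\nu|^p$ coming from \eqref{Deltapu}. Your final addition of $\int_\Omega \tfrac{\Delta u}{N}\,dx$ corresponds precisely to the linear part $\tfrac{2}{N}u$ of $P$, so the two computations are the same identity viewed with or without the $P$-function packaging.

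One remark: your last paragraph on the $\varepsilon$-approximation is more cautious than the paper itself, which applies the Divergence Theorem directly without a cutoff argument; neither proof spells out the passage to the limit on the inner level set $\{|\nabla u|=\varepsilon\}$, so you are not missing anything relative to the paper.
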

\begin{proof}
By straightforward calculations, we get 
\begin{equation}\label{Pnu}
P_\nu=\nabla P\cdot\nu=2 u_\nu\left((p-1)|u_\nu|^{p-2}u_{\nu\nu}+\frac1N\right),
\end{equation}
cf. \cite[formula (7.7)]{S1981} with $f\equiv w\equiv1$, $\alpha=2/N$, $q=|\nabla u|^2$, and $v(q)=q^{(p-2)/2}$.
By taking into account \eqref{Deltau}, \eqref{Deltapu}, and the equation in \eqref{P}, we can rewrite $P_\nu$ as
\begin{equation}\label{Pnu1}
\begin{aligned}
P_\nu&=2 u_\nu\left(\Delta_p u-(N-1)H |u_\nu|^{p-2}u_{\nu}+\frac1N\right)\\
&=-2(N-1)\left(\frac{1}{N}u_\nu+H|u_\nu|^p\right). 
\end{aligned}
\end{equation}
Moreover,
\begin{equation}\label{DeltaP}
\Delta P=2\left\{(p-1)|\nabla u|^{p-2}\left[(p-2)\left\|D^2 u\frac{\nabla u}{|\nabla u|}\right\|^{2}+\|D^2u\|^2+\langle\nabla u,\nabla\Delta u\rangle\right]+\frac{\Delta u}N\right\}
\end{equation}
cf. \cite[formula (7.9)]{S1981}.
The conclusion then follows, since $\int_\Omega \Delta P dx=\int_{\partial\Omega}P_\nu d\sigma$,  by the Divergence Theorem. 
\end{proof}

\begin{proposition}[Newton's inequality]\label{matrix_ineq}
Let $n\in\mathbb N$ and $A$ be a $(n\times n)$-matrix, then 
\begin{equation}\label{ineqFF}
\|A\|^2\ge\frac{(\mathrm{Tr}(A))^2}{n},
\end{equation}
where denotes $\mathrm{Tr}(\cdot)$ the trace of a matrix. Furthermore, the equality holds in \eqref{ineqFF} if and only if $A=k\mathrm{I}_n$ for some constant $k$.
\end{proposition}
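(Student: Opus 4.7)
My plan is to prove Proposition \ref{matrix_ineq} by a two-step application of the Cauchy--Schwarz inequality, isolating first the diagonal part of the Frobenius norm and then the off-diagonal part. This cleanly separates the two equality conditions that together characterize scalar multiples of the identity.

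First I would observe that by the definition of the Frobenius norm,
\[
\|A\|^2 = \sum_{i,j=1}^n a_{ij}^2 \ge \sum_{i=1}^n a_{ii}^2,
\]
with equality if and only if $a_{ij}=0$ for all $i\neq j$, i.e.\ $A$ is diagonal. Next I would apply the Cauchy--Schwarz inequality to the vectors $(a_{11},\dots,a_{nn})$ and $(1,\dots,1)$ in $\mathbb R^n$, giving
\[
(\mathrm{Tr}(A))^2 = \left(\sum_{i=1}^n a_{ii}\right)^2 \le n \sum_{i=1}^n a_{ii}^2,
\]
with equality if and only if all diagonal entries $a_{ii}$ are equal to a common constant $k$.

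Combining the two estimates yields the desired inequality $\|A\|^2 \ge (\mathrm{Tr}(A))^2/n$. For the equality case, I would note that equality in the combined chain forces equality in both steps simultaneously: the matrix must be diagonal \emph{and} its diagonal entries must all coincide with some $k\in\mathbb R$, hence $A=k\,\mathrm{I}_n$. Conversely, for $A=k\,\mathrm{I}_n$ one checks directly that $\|A\|^2=nk^2$ and $(\mathrm{Tr}(A))^2/n = (nk)^2/n = nk^2$, so equality indeed holds.

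There is essentially no obstacle here; the statement is a standard linear-algebra inequality, and both steps are routine once the Frobenius norm is expressed in terms of entries. The only thing worth being careful about is tracking the equality cases through both steps and stating the converse to confirm the characterization is sharp.
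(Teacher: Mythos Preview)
Your proof is correct and complete. The paper takes a different route: it proceeds by induction on $n$, expanding $(\mathrm{Tr}(A))^2=\big(\sum_{i=1}^{n}a_{ii}+a_{n+1,n+1}\big)^2$, applying the inductive hypothesis to the leading $n\times n$ block, and then controlling the cross term $2\big(\sum_{i=1}^{n}a_{ii}\big)a_{n+1,n+1}$ with the elementary inequality $2ab\le a^2+b^2$, tracking the equality cases through each step. Your argument is more direct: isolating the diagonal in $\|A\|^2=\sum_{i,j}a_{ij}^2\ge\sum_i a_{ii}^2$ and then applying Cauchy--Schwarz to $(a_{11},\dots,a_{nn})$ and $(1,\dots,1)$ collapses the whole proof to two transparent steps and makes the equality characterization (diagonal \emph{and} constant diagonal) immediate. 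The inductive proof has the minor advantage of being entirely self-contained from $2ab\le a^2+b^2$, while yours identifies the result as an instance of a standard inequality; both are perfectly adequate for this proposition.
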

\begin{proof} The proof is standard, but we report it here for the sake of completeness.
The statement is trivial for $n=1$. We proceed by induction on $n\ge2$. If we denote by $a_{ij}$ the elements of the matrix $A$, we obtain for $n=2$ that 
\begin{equation}\label{n=2}
(\mathrm{Tr}(A))^2=(a_{11}+a_{22})^2=a_{11}^2+a_{22}^2+2a_{11}a_{22}\le2(a_{11}^2+a_{22}^2)\le 2 \|A\|^2,
\end{equation}
where we have used that $2a_{11}a_{22}\le a_{11}^2+a_{22}^2$, being $(a_{11}-a_{22})^2=a_{11}^2+a_{22}^2-2a_{11}a_{22}\ge 0$. As a consequence, we observe that \eqref{n=2} holds with the equality signs if and only if $a_{11}=a_{22}$ and $a_{12}=a_{21}=0$. 
We now assume that \eqref{ineqFF} holds true for $n$ and we prove it for $n+1$. Indeed, 
\begin{equation}\label{induc-step}
\begin{aligned}
(\mathrm{Tr}(A))^2&=\left(\sum_{i=1}^{n+1}a_{ii}\right)^2=\left(\sum_{i=1}^{n}a_{ii}+a_{n+1,n+1}\right)^2\\
&=\left(\sum_{i=1}^{n}a_{ii}\right)^2+2\left(\sum_{i=1}^{n}a_{ii}\right)a_{n+1,n+1}+a_{n+1,n+1}^2\\
&\le n\sum_{i=1}^{n}a_{ii}^2+n\sum_{\underset{i\neq j}{i,\,j=1}}^na_{ij}^2+2\left(\sum_{i=1}^{n}a_{ii}\right)a_{n+1,n+1}+a_{n+1,n+1}^2.
\end{aligned}
\end{equation}
Now, as above, we can estimate 
$$
\begin{aligned}
2\left(\sum_{i=1}^{n}a_{ii}\right)a_{n+1,n+1}&=\sum_{i=1}^{n}2a_{ii}a_{n+1,n+1}\\
&\le\sum_{i=1}^{n}(a_{ii}^2+a_{n+1,n+1}^2)=na_{n+1,n+1}^2+\sum_{i=1}^{n}a_{ii}^2,
\end{aligned}
$$
where the equality is achieved only for $a_{ii}=a_{n+1,n+1}$ for every $i=1,\dots,n$. Therefore, combining this estimate with \eqref{induc-step}, we obtain
$$
\begin{aligned}
(\mathrm{Tr}(A))^2&\le n\sum_{i=1}^{n}a_{ii}^2+a_{n+1,n+1}^2+na_{n+1,n+1}^2+\sum_{i=1}^{n}a_{ii}^2+n\sum_{\underset{i\neq j}{i,\,j=1}}^na_{ij}^2\\
&=(n+1)\sum_{i=1}^{n}a_{ii}^2+(n+1)a_{n+1,n+1}^2+n\sum_{\underset{i\neq j}{i,\,j=1}}^na_{ij}^2\\
&=(n+1)\sum_{i=1}^{n+1}a_{ii}^2+n\sum_{\underset{i\neq j}{i,\,j=1}}^na_{ij}^2\le (n+1)\sum_{i,\,j=1}^n a_{ij}^2,
\end{aligned}
$$
where the equalities hold only when $A=k\mathrm{I}_{n+1}$ for some constant $k$, and the proof is complete.
\end{proof}

\begin{corollary}
Let $v$ be any $C^2$-function, then the following inequality holds
\begin{equation}\label{ineq_FF_deltap}
(\Delta_p v)^2\le N|\nabla v|^{2(p-2)}\left\|\left(\mathrm{I}+(p-2)\frac{\nabla v}{|\nabla v|}\otimes \frac{\nabla v}{|\nabla v|}\right)D^2v\right\|^2.
\end{equation}
\end{corollary}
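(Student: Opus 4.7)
The plan is to observe that this corollary is an immediate consequence of combining the trace representation \eqref{p-lapl_traccia} of the $p$-Laplacian with the matrix inequality just proved in Proposition \ref{matrix_ineq}.

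Concretely, I would set, at any point where $\nabla v \neq 0$,
\[
A := |\nabla v|^{p-2}\left(\mathrm{I}+(p-2)\frac{\nabla v}{|\nabla v|}\otimes\frac{\nabla v}{|\nabla v|}\right)D^2 v,
\]
which is an $N\times N$ matrix. The last line of the chain of identities in \eqref{p-lapl_traccia} then reads precisely $\Delta_p v = \mathrm{Tr}(A)$. Squaring and applying Newton's inequality \eqref{ineqFF} with $n=N$ to $A$ gives
\[
(\Delta_p v)^2 = (\mathrm{Tr}(A))^2 \le N\|A\|^2.
\]
Finally, since the scalar factor $|\nabla v|^{p-2}$ pulls out of the Frobenius norm as its square, $\|A\|^2 = |\nabla v|^{2(p-2)}\bigl\|(\mathrm{I}+(p-2)\tfrac{\nabla v}{|\nabla v|}\otimes\tfrac{\nabla v}{|\nabla v|})D^2 v\bigr\|^2$, which is exactly \eqref{ineq_FF_deltap}.

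There is no real obstacle here: the only point worth a line of comment is the critical set $\{\nabla v = 0\}$, where \eqref{p-lapl_traccia} is not literally applicable. On this set both sides of \eqref{ineq_FF_deltap} should be interpreted as zero in the natural sense (for $1<p<2$ this is where $\Delta_p v$ is defined in a limiting/distributional fashion), so the inequality holds trivially; the content of the corollary lies entirely on the non-critical set, where the argument above applies pointwise.
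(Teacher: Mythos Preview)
Your proof is correct and is exactly the paper's approach: define $A$ as the matrix in \eqref{p-lapl_traccia}, read off $\Delta_p v=\mathrm{Tr}(A)$, and apply Newton's inequality \eqref{ineqFF} with $n=N$. The paper states this in a single sentence and does not comment on the critical set, but your added remark there is harmless.
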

\begin{proof} 
Taking into account \eqref{p-lapl_traccia}, it is enough to apply Proposition \ref{matrix_ineq} with $n:=N$ and $A:=|\nabla v|^{p-2}\left(\mathrm{I}+(p-2)\frac{\nabla v}{|\nabla v|}\otimes\frac{\nabla v}{|\nabla v|}\right)D^2v$.
\end{proof}
\smallskip

For every $z\in\mathbb R^N$ and $r>0$, we introduce the function
\begin{equation}\label{rad-sol}
w_r(x):=-\frac{p-1}{pN^{\frac1{p-1}}}\left(|x-z|^{\frac{p}{p-1}}-r\right)\quad\mbox{for every }x\in\Omega.
\end{equation}
We observe that, if $z\in\Omega$ and $p>2$, $w$ does not have $C^2$ partial derivatives. 
Clearly, $w_r$ is radial about $z$, and, if $\Omega=B_r(z)$, it solves \eqref{P}.
Indeed, by straightforward calculations we get 
$$
\begin{aligned}
&\nabla w_r=-N^{-\frac{1}{p-1}}|x-z|^{\frac{p}{p-1}-2}(x-z),\\
&|\nabla w_r|^{p-2}\nabla w_r=-\frac1N(x-z),
\end{aligned}$$
and so 
$$
\Delta_p w_r=\mathrm{div}\left(-\frac1N(x-z)\right)=-1.
$$

We are now ready to prove the following result. 

\begin{lemma}\label{onlyrad}
\begin{itemize} Let  $1<p<2$, then the following statements hold true.
\item[(i)] Let $w_r$ be defined as in \eqref{rad-sol}, then for $v:=w_r$ the equality holds in \eqref{ineq_FF_deltap}.
\item[(ii)] Let $u$ solve \eqref{P}. Suppose that the critical set $\mathcal C$ of $u$ has zero $N$-dimensional measure and that for $v:=u$ the equality holds in \eqref{ineq_FF_deltap} for every $x\in\Omega\setminus\mathcal C$. Then $u$ is radial.
\end{itemize}  
\end{lemma}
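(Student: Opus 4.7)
The plan is to handle (i) by direct computation and (ii) by showing that equality on $\Omega\setminus\mathcal{C}$ forces the $P$-function to be globally constant, which then makes $|\nabla u|$ constant on $\partial\Omega$ and reduces the conclusion to the already-quoted Serrin-type overdetermined symmetry result for \eqref{overP}.

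For (i), I would use the formulas derived just before the lemma: $|\nabla w_r|^{p-2}\nabla w_r = -\frac{1}{N}(x-z)$ and $\Delta_p w_r = -1$. Since $w_r$ is radial, $D^2 w_r$ splits into its radial and tangential components with respect to $\hat{r}:=(x-z)/|x-z|$, and $\hat{r}$ is an eigenvector of $\mathrm{I}+(p-2)\hat{r}\otimes\hat{r}$ with eigenvalue $p-1$. A short calculation then gives
\[
|\nabla w_r|^{p-2}\left(\mathrm{I} + (p-2)\tfrac{\nabla w_r}{|\nabla w_r|}\otimes\tfrac{\nabla w_r}{|\nabla w_r|}\right)D^2 w_r = -\tfrac{1}{N}\mathrm{I},
\]
whose squared Frobenius norm equals $1/N = (\Delta_p w_r)^2/N$; this realises the equality case of Proposition~\ref{matrix_ineq} and hence of \eqref{ineq_FF_deltap}.

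For (ii), the hypothesis combined with the equality clause of Proposition~\ref{matrix_ineq} forces
\[
A(x) := |\nabla u|^{p-2}\left(\mathrm{I}+(p-2)\tfrac{\nabla u\otimes\nabla u}{|\nabla u|^2}\right)D^2 u = k(x)\,\mathrm{I}
\]
on $\Omega\setminus\mathcal{C}$, and taking traces via \eqref{p-lapl_traccia} identifies $k(x) = \Delta_p u/N = -1/N$. Applying $A = -\frac{1}{N}\mathrm{I}$ to $\nabla u$, which is an eigenvector of $\mathrm{I}+(p-2)\frac{\nabla u\otimes\nabla u}{|\nabla u|^2}$ with eigenvalue $p-1$, gives $D^2 u\,\nabla u = -\frac{|\nabla u|^{2-p}}{N(p-1)}\nabla u$. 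Substituting this into the straightforward identity $\nabla P = 2(p-1)|\nabla u|^{p-2}D^2 u\,\nabla u + \frac{2}{N}\nabla u$ produces a perfect cancellation and yields $\nabla P\equiv 0$ on $\Omega\setminus\mathcal{C}$.

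To upgrade this to $P\equiv\mathrm{const}$ on $\bar\Omega$, I would invoke Lemma~\ref{Pnu>0}: by its proof, $|\nabla u|$ is bounded away from zero on a closed neighborhood $D$ of $\partial\Omega$, so $D\cap\mathcal{C} = \emptyset$ and $\nabla P\equiv 0$ on $D$; in particular $P_\nu\equiv 0$ on $\partial\Omega$, which rules out the second alternative in Lemma~\ref{Pnu>0} and forces $P$ to be constant on $\bar\Omega$. Since $u\equiv 0$ on $\partial\Omega$, this constancy of $P$ gives $|\nabla u|\equiv\mathrm{const}$ on $\partial\Omega$, so $u$ solves the overdetermined problem \eqref{overP}, and the Serrin-type symmetry result of Damascelli--Pacella \cite{DP} (valid for $1<p<2$) concludes that $\Omega$ is a ball and $u$ is radial. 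The main obstacle, in my view, is precisely this upgrade from $\nabla P = 0$ on the possibly disconnected open set $\Omega\setminus\mathcal{C}$ to global constancy of $P$ on $\bar\Omega$; Lemma~\ref{Pnu>0} is the essential ingredient that converts a local statement near the boundary into a global one.
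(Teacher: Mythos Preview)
Your argument is correct. Part (i) matches the paper exactly. For part (ii) you take a genuinely different route from the paper, and it is worth recording the contrast.

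The paper never computes $\nabla P$ on all of $\Omega\setminus\mathcal C$. Instead, from $A=k\mathrm{I}$ it inverts the matrix $\mathrm{I}+(p-2)\frac{\nabla u}{|\nabla u|}\otimes\frac{\nabla u}{|\nabla u|}$ via a Neumann series to obtain $D^2u$ explicitly, then uses the equation $-\Delta_p u=1$ to pin down $k=-1/N$, and finally extracts the boundary identities $|u_\nu|^{p-2}u_{\nu\nu}=-\tfrac{1}{N(p-1)}$ and $H=\tfrac{1}{N|u_\nu|^{p-1}}$ on $\partial\Omega$. Plugging these into the formula \eqref{Pnu1} shows that $P_\nu\equiv 0$ on $\partial\Omega$, contradicting the alternative $P_\nu>0$ in Lemma~\ref{Pnu>0}; in the surviving case ``$P$ constant'' the paper concludes via the isoparametric-surface characterisation of \cite[Remark~5.5]{FGK2006}.

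Your shortcut---applying $A=-\tfrac1N\mathrm{I}$ to $\nabla u$ and reading off $D^2u\,\nabla u$ directly, then observing the exact cancellation in $\nabla P$---is cleaner and yields the stronger statement $\nabla P\equiv 0$ on $\Omega\setminus\mathcal C$, not just $P_\nu=0$ on the boundary. Your endgame (constant $|\nabla u|$ on $\partial\Omega$, then Damascelli--Pacella \cite{DP}) is logically independent of the paper's isoparametric argument; both rely on previously established symmetry results for~\eqref{overP}, so neither is circular. The trade-off is that the paper's boundary computation of $H$ and $u_{\nu\nu}$ feeds directly into the later integral estimates (e.g.\ in Theorem~\ref{th:main}), whereas your argument is more self-contained for the lemma itself but does not produce those by-products.
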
 
\begin{proof}
(i) Since 
$$
\frac{\partial^2 w_r}{\partial x_i \partial x_j}=-N^{-\frac{1}{p-1}}\left[\frac{2-p}{p-1}|x-z|^{\frac{p}{p-1}-4}(x_j-z_j)(x_i-z_i)+\delta_{ij}|x-z|^{\frac{p}{p-1}-2}\right],
$$
the Hessian of $w_r$ has the following expression
$$
D^2w_r=-N^{-\frac{1}{p-1}}|x-z|^{\frac{2-p}{p-1}}\left(\frac{2-p}{p-1}\cdot\frac{x-z}{|x-z|}\otimes\frac{x-z}{|x-z|}+\mathrm{I}\right).
$$
By
$$
\left(\frac{x-z}{|x-z|}\otimes\frac{x-z}{|x-z|}\right)^2=\frac{x-z}{|x-z|}\otimes\frac{x-z}{|x-z|}\quad\mbox{ and }\quad \frac{\nabla w_r}{|\nabla w_r|}=\frac{x-z}{|x-z|},
$$
we get 
$$
\begin{aligned}
&|\nabla w_r|^{p-2}\left(\mathrm{I}+(p-2)\frac{\nabla w_r}{|\nabla w_r|}\otimes\frac{\nabla w_r}{|\nabla w_r|}\right)D^2w_r\\
&=-\frac{|x-z|^{\frac{2-p}{p-1}+(\frac{p}{p-1}-1)(p-2)}}{N}\left[\mathrm{I}+\left(\frac{2-p}{p-1}-\frac{(p-2)^2}{p-1}+p-2\right)\frac{x-z}{|x-z|}\otimes\frac{x-z}{|x-z|}\right]\\
&=-\frac{1}{N}\mathrm{I}.
\end{aligned}
$$
Hence, by Proposition \ref{matrix_ineq}, \eqref{ineq_FF_deltap} holds with the equality sign for $v:=w_r$.\smallskip

(ii) By Proposition \ref{matrix_ineq}, we know that the equality holds in \eqref{ineq_FF_deltap} if and only if 
$$
|\nabla u|^{p-2}\left(\mathrm{I}+(p-2)\frac{\nabla u}{|\nabla u|}\otimes \frac{\nabla u}{|\nabla u|}\right)D^2u=k \mathrm{I}
$$ 
for some constant $k$. By $\left\|(2-p)\frac{\nabla u}{|\nabla u|}\otimes \frac{\nabla u}{|\nabla u|}\right\|=|2-p|<1$, 
$$
\mathrm{det}\left(\mathrm{I}-(2-p)\frac{\nabla u}{|\nabla u|}\otimes \frac{\nabla u}{|\nabla u|}\right)\neq 0,
$$
and 
$$
\left(\frac{x}{|x|}\otimes\frac{x}{|x|}\right)^i=\frac{x}{|x|}\otimes\frac{x}{|x|}\quad\mbox{for all }x\in\mathbb R^N\mbox{ and all }i\in\mathbb N,
$$
we get on $\Omega\setminus\mathcal C$ 
\begin{equation}\label{system}
\begin{aligned}
D^2u&=\frac{k}{|\nabla u|^{p-2}}\left(\mathrm I-(2-p)\frac{\nabla u}{|\nabla u|}\otimes \frac{\nabla u}{|\nabla u|}\right)^{-1}\\
&=\frac{k}{|\nabla u|^{p-2}}\sum_{i=0}^\infty(2-p)^i\left(\frac{\nabla u}{|\nabla u|}\otimes\frac{\nabla u}{|\nabla u|}\right)^i\\
&=\frac{k}{|\nabla u|^{p-2}}\left(\mathrm{I}+\frac{\nabla u}{|\nabla u|}\otimes\frac{\nabla u}{|\nabla u|}\sum_{i=1}^\infty(2-p)^i\right)\\
&=\frac{k}{|\nabla u|^{p-2}}\left[\mathrm{I}+\frac{\nabla u}{|\nabla u|}\otimes\frac{\nabla u}{|\nabla u|}\left(\frac{1}{1-(2-p)}-1\right)\right]\\
&=\frac{k}{|\nabla u|^{p-2}}\left(\mathrm{I}-\frac{p-2}{p-1}\frac{\nabla u}{|\nabla u|}\otimes\frac{\nabla u}{|\nabla u|}\right).
\end{aligned}
\end{equation}
Namely, for $i,\,j=1,\dots,N$
\begin{equation*}
\partial^2_{ij}u=\frac{k}{|\nabla u|^{p-2}}\left(\delta_{ij}-\frac{p-2}{p-1}\frac{\partial_i u\partial_j u}{|\nabla u|^2}\right). 
\end{equation*}
Hence, in particular, 
\begin{equation}\label{Laplacian}
\Delta u=\frac{k}{|\nabla u|^{p-2}}\sum_{i=1}^N\left(1-\frac{p-2}{p-1}\frac{(\partial_i u)^2}{|\nabla u|^2}\right)=\frac{k}{|\nabla u|^{p-2}}\left(N-\frac{p-2}{p-1}\right).
\end{equation}

Furthermore, since $u$ solves \eqref{P}, then by \eqref{system}, \eqref{Laplacian}, and \eqref{plapl}, we have
$$
\begin{aligned}
-1&=|\nabla u|^{p-2}\left(\Delta u+(p-2)\left\langle  D^2u\frac{\nabla u}{|\nabla u|}, \frac{\nabla u}{|\nabla u|}\right\rangle\right)\\
&=k\sum_{i=1}^N\left(1-\frac{p-2}{p-1}\frac{(\partial_i u)^2}{|\nabla u|^2}\right)+(p-2)\left\langle|\nabla u|^{p-2}D^2 u\frac{\nabla u}{|\nabla u|},\frac{\nabla u}{|\nabla u|}\right\rangle\\
&=k\sum_{i=1}^N\left(1-\frac{p-2}{p-1}\frac{(\partial_i u)^2}{|\nabla u|^2}\right)+(p-2)\left\langle k\left(I-\frac{p-2}{p-1}\frac{\nabla u}{|\nabla u|}\otimes\frac{\nabla u}{|\nabla u|}\right) \frac{\nabla u}{|\nabla u|},\frac{\nabla u}{|\nabla u|}\right\rangle\\
&=k\left[N-\frac{p-2}{p-1}+(p-2)\left(1-\frac{p-2}{p-1}\left\langle\frac{\nabla u}{|\nabla u|}\otimes\frac{\nabla u}{|\nabla u|} \frac{\nabla u}{|\nabla u|},\frac{\nabla u}{|\nabla u|}\right\rangle\right)\right]\\
&=k\left[N-\frac{p-2}{p-1}+(p-2)\left( 1-\frac{p-2}{p-1}\right)\right]=kN,
\end{aligned}
$$
where in the last equality, we have used that 
$$
\frac{x}{|x|}\otimes\frac{x}{|x|}\frac{x}{|x|}=\frac{x}{|x|} \quad\mbox{for all }x\in\mathbb R^N.
$$
Hence, $k=-\frac{1}{N}$.

Now, by the equation in \eqref{P}, \eqref{Laplacian}, and \eqref{Deltau}, we get on non-critical level sets of $u$ 
$$
\begin{gathered}
|u_\nu|^{p-2}\left[(p-1)u_{\nu\nu}+(N-1)Hu_\nu\right]=-1,\\
u_{\nu\nu}+(N-1)Hu_\nu=\left(\frac{p-2}{N(p-1)}-1\right)\frac{1}{|u_\nu|^{p-2}},
\end{gathered}
$$
being $u_\nu=-|\nabla u|$. These two identities give
$$
|u_\nu|^{p-2}u_{\nu\nu}=-\frac{1}{N(p-1)}
$$
and consequently
\begin{equation}\label{Hunu}
H=\frac{1}{N|u_\nu|^{p-1}}\quad\mbox{on }\partial\Omega.
\end{equation}
Now, by Lemma \ref{Pnu>0}, we know that either $P$ is constant on $\bar\Omega$, or $P_\nu>0$ on $\partial\Omega$. 
If the first case occurs, then it is possible to see that all level sets of $u$ are isoparametric surfaces. In particular, since $u$ satisfies homogeneous Dirichlet boundary conditions, all level sets must be concentric spheres and so $u$ is radial, cf. \cite[Remark 5.5]{FGK2006} and \cite[Theorem 5]{Kawohl1999}.
If the second case occurs, then by \eqref{Pnu1}, 
$$\frac{1}{N}u_\nu+H|u_\nu|^p<0\quad\mbox{on }\partial\Omega,$$
therefore, by \eqref{Hunu},
$$
0=\frac{1}{N}(u_\nu-u_\nu)=\frac{u_\nu}{N}+\frac{|u_\nu|}{N}<0\quad\mbox{on }\partial\Omega.
$$
This is impossible and concludes the proof. 
\end{proof}

\section{Proof of the main results}\label{sec3}
Let $u$ solve \eqref{P} and suppose that its critical set $\mathcal C$ has zero $N$-dimensional measure. We introduce the following integral
\begin{equation}\label{integral} 
\mathcal I_p(u):=\int_\Omega\left[|\nabla u|^{(p-2)}\left\|\left(\mathrm{I}+(p-2)\frac{\nabla u}{|\nabla u|}\otimes \frac{\nabla u}{|\nabla u|}\right)D^2u\right\|^2-\left(\frac{\Delta_p u}{N^{1/2}|\nabla u|^{\frac{p-2}{2}}}\right)^2\right]dx.
\end{equation}

\begin{theorem}\label{th:main}
Let $1<p<2$ and $\partial\Omega$ be a $C^{2,\alpha}$ bounded domain of $\mathbb R^N$.
If $u$ solves \eqref{P} and has $|\mathcal C|=0$, then
\begin{itemize}
\item[(i)] $\mathcal I_p(u)\ge 0$ and $\mathcal I_p(u)= 0$ if and only if $u$ is radial; 
\item[(ii)] $\displaystyle{\mathcal I_p(u)\le  -\frac{p(N-1)}{p-1}\int_{\partial \Omega}\left(\frac{1}{N}u_\nu+H|u_\nu|^p\right)d\sigma}$;
\item[(iii)] $\displaystyle{\mathcal I_p(u)\le \frac{p(N-1)}{p-1}\int_{\partial\Omega}|u_\nu|^p(H_0-H)d\sigma}$. 
\end{itemize}
\end{theorem}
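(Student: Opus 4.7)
\emph{Part (i).} The integrand defining $\mathcal I_p(u)$ is exactly the non-negative Newton deficit from inequality \eqref{ineq_FF_deltap} applied to $v=u$ (recall $\Delta_p u=-1$), whence $\mathcal I_p(u)\ge 0$. If $\mathcal I_p(u)=0$, this integrand vanishes a.e.\ on $\Omega\setminus\mathcal C$, so equality holds pointwise in \eqref{ineq_FF_deltap} almost everywhere, and Lemma \ref{onlyrad}(ii) then forces $u$ to be radial. Conversely, any radial solution of \eqref{P} coincides up to translation with the function $w_r$ in \eqref{rad-sol}, and Lemma \ref{onlyrad}(i) yields pointwise equality in \eqref{ineq_FF_deltap}, hence $\mathcal I_p(u)=0$.

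\emph{Part (ii).} The plan is to rewrite the target right-hand side, using \eqref{Pnu1} and the Divergence Theorem, as $\frac{p}{2(p-1)}\int_\Omega\Delta P\,dx$. Set $\xi=\nabla u/|\nabla u|$ and $B=I+(p-2)\,\xi\otimes\xi$; a direct computation yields
\[
\|B D^2u\|^2=\|D^2u\|^2+p(p-2)\|D^2u\,\xi\|^2 = \bigl[\|D^2u\|^2+(p-2)\|D^2u\,\xi\|^2\bigr]+(p-1)(p-2)\|D^2u\,\xi\|^2.
\]
Now \eqref{identity} and \eqref{DeltaP} evaluate $\int_\Omega\Delta P\,dx$ in terms of boundary integrals, while \eqref{id1} handles $\int_\Omega|\nabla u|^{p-2}\langle\nabla u,\nabla\Delta u\rangle\,dx$. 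The difference $\mathcal I_p(u)-\frac{p}{2(p-1)}\int_\Omega\Delta P\,dx$ then reduces to the sum of a boundary piece, which is absorbed using the pointwise inequality
\[
|u_\nu|^{p-1}u_{\nu\nu}\le \tfrac{u_\nu}{N(p-1)}\quad\text{on }\partial\Omega,
\]
itself a consequence of $P_\nu\ge 0$ in Lemma \ref{Pnu>0} combined with $u_\nu<0$ via \eqref{Pnu}; and a bulk piece whose sign is controlled by the factor $(p-1)(p-2)<0$ together with the identity $\Delta u=-|\nabla u|^{2-p}+(2-p)\langle D^2u\,\xi,\xi\rangle$ (equivalent to $\Delta_p u=-1$) and an integration by parts exploiting $\mathrm{div}(\nabla u/|\nabla u|)=-(N-1)H$ on non-critical level sets.

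\emph{Part (iii).} A consequence of (ii). The excess of the right-hand side of (ii) over that of (iii) equals
\[
\frac{p(N-1)}{p-1}\Bigl(\tfrac{1}{N}\int_{\partial\Omega}|u_\nu|\,d\sigma-H_0\int_{\partial\Omega}|u_\nu|^p\,d\sigma\Bigr).
\]
Testing \eqref{P} against the constant function $1$ and integrating by parts yields the Pohozaev-type identity $\int_{\partial\Omega}|u_\nu|^{p-1}d\sigma=|\Omega|$. Combining this with $NH_0|\Omega|=|\partial\Omega|$ and Chebyshev's integral inequality applied to the similarly ordered functions $|u_\nu|^{p-1}$ and $|u_\nu|$ on $\partial\Omega$, one obtains $|\partial\Omega|\int|u_\nu|^pd\sigma\ge \int|u_\nu|^{p-1}d\sigma\cdot\int|u_\nu|d\sigma$, i.e.\ $H_0\int|u_\nu|^pd\sigma\ge\frac{1}{N}\int|u_\nu|d\sigma$. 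Hence the excess above is non-positive, and (iii) follows from (ii).

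\emph{Main obstacle.} The technical heart is (ii): carefully aligning the bulk integrands of $\mathcal I_p(u)$ and of $\int_\Omega\Delta P\,dx$ after decomposing $\|BD^2u\|^2$ and invoking \eqref{id1}, and then exploiting the sign $(p-1)(p-2)<0$ together with the sharper boundary estimate from Lemma \ref{Pnu>0} in such a way that the residual bulk term has the correct sign in the regime $1<p<2$.
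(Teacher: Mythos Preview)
Your treatment of (i) matches the paper exactly. For (iii) you take a genuinely different route: the paper applies H\"older twice (first to $\int_{\partial\Omega}|u_\nu|^{p-1}d\sigma=|\Omega|$ and then to $\int_{\partial\Omega}|u_\nu|\,d\sigma$) to reach $H_0\int_{\partial\Omega}|u_\nu|^p\,d\sigma\ge\tfrac1N\int_{\partial\Omega}|u_\nu|\,d\sigma$, whereas you use Chebyshev's sum inequality on the similarly ordered pair $|u_\nu|^{p-1}$, $|u_\nu|$. Both arguments are valid and yield the same inequality; your version is arguably more direct.

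The real issue is (ii). Your outline has the right ingredients (the expansion $\|BD^2u\|^2=\|D^2u\|^2+p(p-2)\|D^2u\,\xi\|^2$, the identity \eqref{identity}/\eqref{DeltaP} for $\int_\Omega\Delta P$, and the sign information from Lemma~\ref{Pnu>0}), but the way you describe the residual ``bulk piece'' does not match what actually appears, and your proposed mechanism for controlling it is not correct as stated. Concretely: once you subtract $\tfrac{p}{2(p-1)}\int_\Omega\Delta P\,dx$ from $\mathcal I_p(u)$ and then integrate the term $-p\int_\Omega|\nabla u|^{p-2}\langle\nabla u,\nabla\Delta u\rangle\,dx$ by parts (this is where \eqref{id1} enters), the $\|D^2u\,\xi\|^2$ contributions cancel \emph{exactly}; the bulk remainder is
\[
\int_\Omega\Bigl[-(p-1)|\nabla u|^{p-2}\|D^2u\|^2-\tfrac{1}{N}|\nabla u|^{2-p}\Bigr]dx,
\]
which is \emph{manifestly} $\le 0$ --- no appeal to the sign of $(p-1)(p-2)$, no identity for $\Delta u$, and no integration by parts against $\mathrm{div}(\nabla u/|\nabla u|)$ is needed or helpful here. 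The place where $1<p<2$ actually enters is the \emph{boundary} remainder, which (after using $\Delta_p u=-1$ on $\partial\Omega$) is a positive multiple of $p(2-p)\int_{\partial\Omega}|u_\nu|^{p-1}u_{\nu\nu}\,d\sigma$; this is $\le 0$ because Lemma~\ref{Pnu>0} forces $u_{\nu\nu}<0$ on $\partial\Omega$ (unless $P$ is constant, in which case $u$ is radial and equality holds). Your sharper pointwise bound $|u_\nu|^{p-1}u_{\nu\nu}\le \tfrac{u_\nu}{N(p-1)}$ is true but is not what closes the argument: if you try to use it to absorb a leftover positive $\int_{\partial\Omega}|u_\nu|\,d\sigma$ term, you find it only reduces the coefficient without killing it. In short, your description of the bulk/boundary split in (ii) is misassigned; the paper's proof shows that the bulk is trivially signed and the restriction $p<2$ acts solely through the sign of the boundary integral of $u_{\nu\nu}$.
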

\begin{proof}[Proof of Theorem \ref{th:main}] (i) By \eqref{ineq_FF_deltap}, we know that $\mathcal I_p(u)\ge 0$ and, by Lemma \ref{onlyrad}, we know that $\mathcal I_p(u)=0$ if and only if $u$ is radial. 

(ii) First, we observe that a.e. in $\Omega$ we have
$$
\begin{aligned}
&\left\|\left(\mathrm{I}+(p-2)\frac{\nabla u}{|\nabla u|}\otimes \frac{\nabla u}{|\nabla u|}\right)D^2u\right\|^2=
\sum_{i,j=1}^N\left(\partial^2_{ij}u+(p-2)\sum_{k=1}^N\frac{\partial_iu}{|\nabla u|}\frac{\partial_ku}{|\nabla u|}\partial^2_{kj}u\right)^2\\
&=\|D^2u\|^2+2(p-2)\sum_{i,j=1}^N\partial^2_{ij}u\frac{\partial_iu}{|\nabla u|}\sum_{k=1}^N\frac{\partial_k u}{|\nabla u|}\partial^2_{kj}u+(p-2)^2\sum_{i,j=1}^N\left(\sum_{k=1}^N\frac{\partial_iu}{|\nabla u|}\frac{\partial_ku}{|\nabla u|}\partial^2_{kj}u\right)^2\\
&=\|D^2u\|^2+2(p-2)\sum_{j=1}^N\left(\sum_{i=1}^N\frac{\partial_i u}{|\nabla u|}\partial^2_{ij}u\right)^2+(p-2)^2\left\|D^2u\frac{\nabla u}{|\nabla u|}\right\|^2\\
&=\|D^2u\|^2+p(p-2)\left\|D^2u\frac{\nabla u}{|\nabla u|}\right\|^2.
\end{aligned}
$$
Furthermore, by \eqref{identity}, we get 
$$
\begin{aligned}
p(p-2)\int_\Omega |\nabla u|^{p-2}\left\|D^2u\frac{\nabla u}{|\nabla u|}\right\|^2dx=&-p\int_\Omega\left[|\nabla u|^{p-2}\left(\|D^2u\|^2+\langle\nabla u,\nabla \Delta u\rangle \right) + \frac{\Delta u}{N(p-1)}\right]dx \\
&-p\frac{N-1}{p-1}\int_{\partial \Omega}\left(\frac{1}{N}u_\nu+H|u_\nu|^p\right)d\sigma. 
\end{aligned}
$$
Hence, using these last two identities, we can rewrite $\mathcal I_p(u)$ as
$$
\begin{aligned}
\mathcal I_p(u)=&\int_{\Omega}\left\{|\nabla u|^{p-2}\left[-(p-1)\|D^2u\|^2-p\langle\nabla u,\nabla \Delta u\rangle\right]-\frac{p}{N(p-1)}\Delta u-\frac{(\Delta_pu)^2}{N|\nabla u|^{p-2}}\right\}dx\\
&-\frac{p(N-1)}{p-1}\int_{\partial \Omega}\left(\frac{1}{N}u_\nu+H|u_\nu|^p\right)d\sigma.
\end{aligned}
$$
On the other hand, by \eqref{plapl}, the $C^{2,\alpha}$ regularity of $u$ in a neighborhood of $\partial\Omega$, and the Divergence Theorem
$$
\begin{aligned}
-p\int_\Omega&\left(|\nabla u|^{p-2}\langle\nabla u,\nabla \Delta u\rangle+\frac{1}{N(p-1)}\Delta u\right) dx
\\&=\int_\Omega -p\left(1+\frac{1}{N(p-1)}\right)\Delta udx+p\int_{\partial\Omega}|\nabla u|^{p-1}\Delta u d\sigma
\\
&=p\int_{\partial\Omega}\left(1+\frac{1}{N(p-1)}\right)|\nabla u|(1+|\nabla u|^{p-2}\Delta u)d\sigma
\\
&=-p(p-2)\left(1+\frac{1}{N(p-1)}\right)\int_{\partial\Omega}|\nabla u|^{p-1}\langle D^2u\frac{\nabla u}{|\nabla u|},\frac{\nabla u}{|\nabla u|} \rangle d\sigma.
\end{aligned}
$$
Hence,
\begin{equation}\label{identityI}
\begin{aligned}
\mathcal I_p(u) = &\int_{\Omega}\left\{-(p-1)|\nabla u|^{p-2}\|D^2u\|^2-\frac{(\Delta_pu)^2}{N|\nabla u|^{p-2}}\right\}dx\\
& -\frac{p(N-1)}{p-1}\int_{\partial \Omega}\left(\frac{1}{N}u_\nu+H|u_\nu|^p\right)d\sigma \\
&-p(p-2)\left(1+\frac{1}{N(p-1)}\right)\int_{\partial\Omega}|\nabla u|^{p-1}\langle D^2u\frac{\nabla u}{|\nabla u|},\frac{\nabla u}{|\nabla u|} \rangle d\sigma.
\end{aligned}
\end{equation}

In order to estimate from above $\mathcal I_p(u)$, we want to determine the sign of the last term in \eqref{identityI}. 
By Lemma \ref{Pnu>0}, we know that either $P_\nu>0$ on $\partial\Omega$ or $P$ is constant in $\bar\Omega$. If the second case occurs, then, as in the proof of Lemma \ref{onlyrad}-(ii), all level sets of $u$ are concentric spheres, and in particular $\Omega$ is a ball. Without loss of generality we can suppose $\Omega$ to be a ball centered in the origin $B_r$, thus, the unique solution of \eqref{P} is $w_r$, given in \eqref{rad-sol}, with $z=0$. Then, by straightforward calculations, we have for every $x\in\partial B_r$
$$
H(x)=-\frac{1}{N-1}\mathrm{div}\frac{\nabla w_r}{|\nabla w_r|}=\frac{1}{N-1}\sum_{i=1}^N\left(\frac{1}{|x|}-\frac{x_i^2}{|x|^3}\right)=\frac{1}{r}
$$
and $$(w_r)_\nu(x)=-|\nabla w_r(x)|=-\frac{1}{N^{\frac{1}{p-1}}}r^{\frac{1}{p-1}}.$$
Hence, 
$$\frac{1}{N}(w_r)_\nu(x)+H(x)|(w_r)_\nu(x)|^{p}=0\quad\mbox{for every }x\in\partial B_r$$
and the inequality in (ii) is satisfied with the equality sign and we are done. 
We consider now the remaining case $P_\nu>0$ on $\partial\Omega$. In this case 
$$(p-1)|u_\nu|^{p-2}u_{\nu\nu}+\frac{1}{N}<0\quad\mbox{on }\partial\Omega$$
(cf. \eqref{Pnu} and remember that $u_\nu<0$ on $\partial\Omega$), or equivalently
$$u_{\nu\nu}<-\frac{|u_{\nu}|^{2-p}}{N(p-1)}\quad\mbox{on }\partial\Omega.$$
Hence, $u_{\nu\nu}<0$ on $\partial\Omega$, and so, when $1<p<2$, we get 
$$
\mathcal I_p(u)\le  -\frac{p(N-1)}{p-1}\int_{\partial \Omega}\left(\frac{1}{N}u_\nu+H|u_\nu|^p\right)d\sigma. 
$$

(iii) Since $u$ is a solution of \eqref{P}, by Divergence Theorem and H\"older's inequality we have
$$
\begin{aligned}
|\Omega|&=\int_\Omega -\Delta_p udx=-\int_\Omega\mathrm{div}(|\nabla u|^{p-2}|\nabla u)dx=-\int_{\partial\Omega}|\nabla u|^{p-2}\nabla u\cdot \nu d\sigma\\
&=\int_{\partial\Omega}|u_\nu|^{p-1}d\sigma\le \left(\int_{\partial\Omega}|u_\nu|^p d\sigma\right)^{\frac{p-1}{p}}|\partial\Omega|^{\frac{1}{p}}.
\end{aligned}
$$
By using the definition of $H_0$, the previous estimate reads as
$$
\left(\int_{\partial\Omega}|u_\nu|^p d\sigma\right)^{\frac{1}{p'}}\ge\frac{|\Omega|}{|\partial\Omega|^{\frac{1}{p}}}=\frac{|\partial\Omega|^{\frac{1}{p'}}}{N H_0}.
$$
Consequently, by H\"older's inequality,
$$-\int_{\partial\Omega}u_\nu d\sigma\le \|u_\nu\|_{L^p(\partial\Omega)}|\partial\Omega|^{\frac{1}{p'}}\le NH_0\left(\int_{\partial\Omega}|u_\nu|^pd\sigma\right)^{\frac{1}{p}+\frac{1}{p'}}=NH_0\int_{\partial\Omega}|u_\nu|^pd\sigma.
$$
By using this inequality, the right-hand side of \eqref{identity} can be estimated as
\begin{equation}\label{est_boundary}
-(N-1)\int_{\partial\Omega} \left(\frac{1}{N}u_\nu+H|u_\nu|^p\right) d\sigma\le (N-1)\int_{\partial\Omega}|u_\nu|^p(H_0-H)d\sigma.
\end{equation}

Therefore, in view of part (ii) of the present theorem, we have for $1<p<2$
$$
\mathcal I_p(u)\le \frac{p(N-1)}{p-1}\int_{\partial\Omega}|u_\nu|^p(H_0-H)d\sigma. 
$$
This concludes the proof.
\end{proof}

\begin{remark}
From parts (i) and (iii) of the previous theorem, since $|u_\nu|^p$ is bounded on $\partial\Omega$, we have the following upper bound for the $L^1$-norm of the mean curvature $H$ of $\partial\Omega$
$$\int_{\partial\Omega}Hd\sigma\le H_0|\partial\Omega|=\frac{|\partial\Omega|^2}{N|\Omega|}.$$
\end{remark}
\smallskip 

The previous theorem allows us to give an alternative proof of the Soap Bubble Theorem in the case in which the hypersurface is a level set of the solution of problem \eqref{P}. 

\begin{proof}[Proof of Theorem \ref{SBT}.]
The scheme of the proof is the following: a. $\Rightarrow$ c. $\Rightarrow$ b. $\Rightarrow$ c. $\Rightarrow$ a., this proves that a., b. and c. are all equivalent;  then we will prove that  a. $\Rightarrow$  d. $\Rightarrow$ c.,  and finally  b. $\Rightarrow$ e.
\smallskip 

\underline{a. $\Rightarrow$ c.}  If $\Omega=B_r$, the only solution of \eqref{P} is the radial function $w_r$ defined in \eqref{rad-sol}. 
\smallskip

\underline{c. $\Rightarrow$ b.}  As in the proof of Theorem \ref{th:main}-(ii),  if the solution of \eqref{P} is radial, $\Omega=B_r$ for some $r>0$, and so $u=w_r$. Hence, by strighforward calculations, b. holds true. \smallskip

\underline{b. $\Rightarrow$ c.}
By Theorem \ref{th:main}-(ii), we get $\mathcal I_p(u)=0$, which in turn implies that $u$ is radial, by Lemma \ref{onlyrad}. \smallskip

\underline{c. $\Rightarrow$ a.} If $u$ is radial, then $\Gamma=\partial\Omega$, being a level set of $u$, is a sphere, and so $\Omega$ is a ball.

\underline{a. $\Rightarrow$ d.} If $\Omega=B_r$ for some $r>0$, then $u=w_r$ and so, for every $x\in\partial\Omega$
\begin{equation}\label{H0R0}
H(x)=-\frac{1}{N-1}\mathrm{div}\frac{\nabla w_r}{|\nabla w_r|}=\frac{1}{N-1}\sum_{i=1}^N\left(\frac{1}{|x|}-\frac{x_i^2}{|x|^3}\right)=\frac{1}{r}=\frac{|\partial B_r|}{N|B_r|}=H_0.
\end{equation}
\smallskip

\underline{d. $\Rightarrow$ c.} By Theorem \ref{th:main}-(iii), we get $\mathcal I_p(u)=0$, which in turn implies that $u$ is radial, by Lemma \ref{onlyrad}. \smallskip 

\underline{b. $\Rightarrow$ e.} Up to now, we have proved that a., b., c. and d. are equivalent. Thus, if b. holds, we have by d. 
$$
|u_\nu|^{p-2}u_\nu=-\frac{1}{NH_0}\quad\mbox{on }\partial\Omega.
$$
We recall that, on $\partial\Omega$, $\nu=-\frac{\nabla u}{|\nabla u|}$ and consequently $u_\nu=\nabla u\cdot\nu=-|\nabla u|$.
Therefore, 
$$
|u_\nu|^{p-2}u_\nu=-|\nabla u|^{p-1}=-\frac{1}{NH_0}\quad\mbox{on }\partial\Omega,
$$ which gives e. 
\end{proof}

In the remaining part of this section, we give an upper bound of the integral $\mathcal I_p(u)$ in terms of the $L^1(\partial\Omega)$-norm of the difference between the mean curvature of $\partial\Omega$ and the reference constant $H_0$. We start with some preliminary results.

\begin{lemma}\label{annulus}
Let $\Omega=\mathcal A(R_1,R_2)$ be an annulus of radii $0<R_1<R_2$, then there exists a unique $\bar R\in (R_1,R_2)$ such that the positive radial function 
\begin{equation}\label{u_annulus}
u_\mathcal A(r):=\begin{cases}
\mathlarger{\int}_{R_1}^r\left(\displaystyle{\frac{\bar R^N}{N \tau^{N-1}}-\frac{\tau}{N}}\right)^{\frac1{p-1}}\,d\tau\quad&\mbox{for every }r\in [R_1,\bar R],\smallskip\\
 \mathlarger{\int}_r^{R_2}\left(\displaystyle{\frac{\tau}{N}-\frac{\bar R^N}{N\tau^{N-1}}}\right)^{\frac1{p-1}}\,d \tau&\mbox{for every }r\in (\bar R,R_2]
\end{cases}
\end{equation}
is of class $C^1([R_1,R_2])$ and solves \eqref{P}. Furthermore, $u_\mathcal A$ achieves its maximum at $\bar R$, where with abuse of notation we have written $u_\mathcal A(x)=u_\mathcal A(r)$ for $|x|=r$.
\end{lemma}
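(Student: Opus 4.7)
The plan is to reduce $-\Delta_p u = 1$ to a radial ODE. For $u = u(r)$ the equation reads
$$-(r^{N-1}|u'|^{p-2}u')' = r^{N-1},$$
which integrates once to
$$r^{N-1}|u'|^{p-2}u' = -\frac{r^N}{N} + C.$$
Since $u \geq 0$ with Dirichlet data on both boundary components and a positive source, an interior maximum must be attained at some $\bar R \in (R_1, R_2)$, and $u'(\bar R) = 0$ fixes $C = \bar R^N/N$. Solving for $u'$ with the appropriate sign on each side of $\bar R$ (positive on $[R_1, \bar R]$, negative on $[\bar R, R_2]$) and integrating from the appropriate boundary using the Dirichlet condition produces the two branches of \eqref{u_annulus}.

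What remains is to pin down $\bar R$ as the value for which the two branches coincide at $r = \bar R$. Introducing
$$F(R) := \int_{R_1}^{R}\left(\frac{R^N}{N\tau^{N-1}} - \frac{\tau}{N}\right)^{\frac{1}{p-1}} d\tau - \int_{R}^{R_2}\left(\frac{\tau}{N} - \frac{R^N}{N\tau^{N-1}}\right)^{\frac{1}{p-1}} d\tau,$$
one has $F(R_1) < 0$ (the first integral collapses while the second is strictly positive) and $F(R_2) > 0$ by symmetry. For monotonicity, differentiation under the integral sign kills the boundary contributions (both integrands vanish at $\tau = R$) and yields
$$F'(R) = \frac{R^{N-1}}{p-1}\int_{R_1}^{R_2} \frac{1}{\tau^{N-1}}\left|\frac{R^N}{N\tau^{N-1}} - \frac{\tau}{N}\right|^{\frac{2-p}{p-1}} d\tau > 0,$$
since $1 < p < 2$ makes $(2-p)/(p-1) > 0$. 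Thus $F$ is strictly increasing on $[R_1, R_2]$, and the intermediate value theorem yields a unique $\bar R \in (R_1, R_2)$ with $F(\bar R) = 0$.

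Finally, the regularity and remaining claims follow readily. On each closed subinterval the defining integrand is continuous, vanishing at $\tau = R$, so each branch is $C^1$ with derivative vanishing at $\bar R$; the two branches agree at $\bar R$ by the choice $F(\bar R) = 0$, giving $u_\mathcal{A} \in C^1([R_1, R_2])$ with a maximum at $\bar R$ (the derivative changes sign there). The ODE is satisfied classically on $(R_1, \bar R) \cup (\bar R, R_2)$ and $u_\mathcal{A} \in W^{1,p}_0(\Omega)$, so $u_\mathcal{A}$ is the unique weak solution of \eqref{P} guaranteed by Section \ref{sec2}. Positivity is visible from the strict positivity of the integrands on the interior of their intervals. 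The main technical obstacle is justifying differentiation under the integral sign for the monotonicity of $F$: the exponent $1/(p-1) > 1$ ensures that the integrand is $C^1$ up to $\tau = R$ and that $(2-p)/(p-1) > 0$, keeping the expression for $F'(R)$ integrable, so the argument goes through cleanly.
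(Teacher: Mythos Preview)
Your proof is correct and follows essentially the same approach as the paper's. The paper also reduces to the radial ODE, writes down the two integral branches, and then establishes existence and uniqueness of $\bar R$ by showing that the ``inner'' integral $F_1(\rho)=\int_{R_1}^\rho(\cdot)^{1/(p-1)}\,d\tau$ is strictly increasing while the ``outer'' integral $F_2(\rho)=\int_\rho^{R_2}(\cdot)^{1/(p-1)}\,d\tau$ is strictly decreasing, via the same differentiation under the integral sign you perform; your single function $F=F_1-F_2$ merely repackages this, and your additional remarks on $C^1$-matching and the weak-solution identification are welcome details the paper leaves implicit.
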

\begin{proof} Suppose first that such $\bar R$ exists and belongs to $(R_1,R_2)$. In this case, it is straightforward  to verify that the function $u_\mathcal A$ given in \eqref{u_annulus} solves problem \eqref{P}, which can be written in radial form as 
$$
\begin{cases}
|u_\mathcal A'|^{p-2}\left[(p-1)u_\mathcal A''+\frac{N-1}{r}u_\mathcal A'\right]=-1\quad&\mbox{in }(R_1,R_2),\\
u_\mathcal A(R_1)=u_\mathcal A(R_2)=0,&  
\end{cases}
$$
where the symbol $'$ denotes the derivative with respect to $r$. 

Now, if we consider the two functions 
$$
\begin{aligned}
&F_1:\,\rho\in[R_1,R_2]\mapsto \int_{R_1}^\rho \left(\frac{\rho^N}{N\tau^{N-1}}-\frac{\tau}{N}\right)^{\frac1{p-1}}\,d\tau\,\in\mathbb R,\\
&F_2:\,\rho\in[0,R_2]\mapsto \int_\rho^{R_2} \left(\frac{\tau}{N}-\frac{\rho^N}{N\tau^{N-1}}\right)^{\frac1{p-1}}\,d\tau\,\in\mathbb R,\\
\end{aligned}
$$
they have the following properties: 
$$
\begin{gathered}
F_1(R_1)=F_2(R_2)=0,\\
0<F_1(\rho)<+\infty\mbox{ for every }\rho\in(R_1,R_2], \quad 0<F_2(\rho)<+\infty\mbox{ for every }\rho\in[0,R_2), \\
F_1'(\rho)=\frac{1}{p-1}\int_{R_1}^\rho\left(\frac{\rho^N}{N\tau^{N-1}}-\frac{\tau}{N}\right)^{\frac{2-p}{p-1}}\left(\frac{\rho}{\tau}\right)^{N-1}\,d\tau>0\mbox{ for every }\rho\in (R_1,R_2],\\
F_2'(\rho)=-\frac{1}{p-1}\int_{\rho}^{R_2}\left(\frac{\tau}{N}-\frac{\rho^N}{N\tau^{N-1}}\right)^{\frac{2-p}{p-1}}\left(\frac{\rho}{\tau}\right)^{N-1}\,d\tau<0\mbox{ for every }\rho\in [0,R_2).
\end{gathered}
$$
Therefore, there exists a unique $\rho=\bar R\in (R_1,R_2)$ for which $F_1(\bar R)=F_2(\bar R)$. This concludes the proof.
\end{proof}

\begin{definition}\label{opt_rad}
A domain $\Omega\subset\mathbb R^N$ satisfies the {\it uniform interior and exterior touching sphere conditions}, and we denote with $\rho_i$ and $\rho_e$ the optimal interior and exterior radii respectively, if for any $x_0\in \partial\Omega$ there exist two balls $B_{\rho_i}(c^-)\subset\Omega$ and $B_{\rho_e}(c^+)\subset\mathbb R^N\setminus\bar\Omega$ such that $x_0\in\partial B_{\rho_i}(c^-)\cap \partial B_{\rho_e}(c^+)$. 
We call {\it optimal radius} the minimum between the interior and the exterior radius, $\rho:=\min\{\rho_i,\rho_e\}$.
\end{definition}

We observe that is $\Omega$ is of class $C^2$, then it satisfies the uniform interior and exterior touching sphere conditions.

\begin{proposition}\label{bdd_grad}
Let $\Omega\subset\mathbb R^N$ be a bounded domain of class $C^2$ and $u\in C^1(\bar\Omega)$ be a solution of \eqref{P} in $\Omega$. Then 
$$\left(\frac{\rho_i}{N}\right)^{\frac{1}{p-1}} \le |\nabla u|\le \left[\frac{(\mathrm{diam}(\Omega)+\rho_e)^N}{N\rho_e^{N-1}}-\frac{\rho_e}{N}\right]^{\frac{1}{p-1}}\quad\mbox{on }\partial\Omega.
$$
\end{proposition}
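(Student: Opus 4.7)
The strategy is to sandwich $u$ near the boundary between explicit radial barriers built from \eqref{rad-sol} and from Lemma \ref{annulus}, and then pass to normal derivatives at common zero boundary points. The main engine in both directions is the weak comparison principle for the $p$-Laplacian with the same right-hand side: if $-\Delta_p v_1 = -\Delta_p v_2$ in a bounded open set and $v_1 \le v_2$ on the boundary, then $v_1 \le v_2$ inside. This applies to all pairs we consider because $\Omega$ is $C^2$ and the radial barriers are smooth (with non-vanishing gradient) away from their center, which lies outside the regions where the comparison is invoked.

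\emph{Lower bound.} Fix $x_0 \in \partial\Omega$ and let $B_{\rho_i}(c^-) \subset \Omega$ be the interior touching ball at $x_0$ provided by Definition \ref{opt_rad}. Let $w = w_{\rho_i}$ be the radial solution of \eqref{P} on $B_{\rho_i}(c^-)$ given by \eqref{rad-sol} with $z = c^-$ and $r = \rho_i$. Then $-\Delta_p u = -\Delta_p w = 1$ in $B_{\rho_i}(c^-)$, and on $\partial B_{\rho_i}(c^-) \subset \bar\Omega$ we have $w = 0 \le u$. By weak comparison, $u \ge w$ in $B_{\rho_i}(c^-)$. Since $u(x_0) = w(x_0) = 0$ and both functions are $C^1$ up to $\partial B_{\rho_i}(c^-)$ near $x_0$ (with non-vanishing gradient for $w$ there), the difference $u - w \ge 0$ vanishes at $x_0$ and has a well-defined inward directional derivative, giving $u_\nu(x_0) \le w_\nu(x_0) < 0$, so $|\nabla u(x_0)| \ge |\nabla w(x_0)|$. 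A direct computation from \eqref{rad-sol} at $|x - c^-| = \rho_i$ yields $|\nabla w(x_0)| = (\rho_i/N)^{1/(p-1)}$, which proves the lower bound.

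\emph{Upper bound.} For the same $x_0 \in \partial\Omega$, let $B_{\rho_e}(c^+) \subset \mathbb R^N \setminus \bar\Omega$ be the exterior touching ball at $x_0$. Set $R_1 = \rho_e$, $R_2 = \rho_e + \mathrm{diam}(\Omega)$. Since $\bar\Omega \cap B_{\rho_e}(c^+) = \{x_0\}$ and $|x - c^+| \le |x - x_0| + \rho_e \le \mathrm{diam}(\Omega) + \rho_e$ for every $x \in \bar\Omega$, we have $\Omega \subset \mathcal A(R_1, R_2)$ (with center $c^+$). Let $U(x) := u_\mathcal A(|x - c^+|)$ be the radial solution of \eqref{P} on this annulus furnished by Lemma \ref{annulus}. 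Then $U \ge 0 = u$ on $\partial\Omega$, so weak comparison in $\Omega$ gives $u \le U$ in $\Omega$. At $x_0$ both vanish; the outward normal $\nu(x_0) = (c^+ - x_0)/\rho_e$ agrees with the outward normal of $\mathcal A$ along its inner boundary $\partial B_{\rho_e}(c^+)$, so the analogous one-sided derivative argument gives
\begin{equation*}
|\nabla u(x_0)| \le |\nabla U(x_0)| = u_\mathcal A'(\rho_e) = \left(\frac{\bar R^N}{N\rho_e^{N-1}} - \frac{\rho_e}{N}\right)^{1/(p-1)}.
\end{equation*}
Lemma \ref{annulus} guarantees $\bar R < R_2 = \rho_e + \mathrm{diam}(\Omega)$, so monotonicity in $\bar R$ of the bracket above produces the claimed upper bound.

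\emph{Main obstacle.} The delicate points are (i) invoking the weak comparison principle for the $p$-Laplacian against the radial barriers, which requires that neither solution degenerates in the region where the comparison is applied (the barriers $w_{\rho_i}$ and $U$ have non-vanishing gradient throughout $B_{\rho_i}(c^-)$ and $\mathcal A$ respectively, since their only critical points are at $c^-$ and the sphere $\{|x - c^+| = \bar R\}$, both outside the relevant comparison domain near $x_0$); and (ii) promoting the pointwise inequality $u \ge w$ (resp. $u \le U$) to a gradient inequality at the common zero $x_0$. The latter is elementary because all three functions are $C^1$ up to $\partial\Omega$ (and to $\partial B_{\rho_i}(c^-)$, $\partial B_{\rho_e}(c^+)$) in a neighborhood of $x_0$ and $u_\nu(x_0), w_\nu(x_0), U_\nu(x_0)$ are all strictly negative, so the inequality of inward one-sided derivatives of the non-negative difference implies the claimed inequality of moduli of the gradients.
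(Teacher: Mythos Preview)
Your proof is correct and follows essentially the same approach as the paper: compare $u$ with the radial solution $w_{\rho_i}$ on the interior touching ball and with the annular solution $u_{\mathcal A}$ from Lemma~\ref{annulus} on the exterior, then pass to normal derivatives at the common boundary point $x_0$. One small remark: your discussion of ``main obstacle'' (i) is unnecessary, since the weak comparison principle for $-\Delta_p$ used here (see \cite[Lemma~3.7]{FGK2006}, as cited in the paper) applies to weak solutions and does not require non-degeneracy of the gradient; in particular the critical point of $w_{\rho_i}$ at $c^-$ and the critical sphere $\{|x-c^+|=\bar R\}$ of $U$ (which may well intersect $\Omega$) cause no difficulty.
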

\begin{proof}
We follow the ideas in \cite[Theorem 3.10]{MP2016}.  Let $x_0$ be any point on the boundary $\partial\Omega$. Without loss of generality, we can place the origin at $c^-$. Thus, the function
$$u_{\rho_i}:=-\frac{p-1}{p N^{\frac1{p-1}}}\left(|x|^{\frac{p}{p-1}}-\rho_i^{\frac{p}{p-1}}\right)$$ is the solution of \eqref{P} in $B_{\rho_i}$.
Now,  being by definition $B_{\rho_i}\subset\Omega$,
$$
\begin{cases}
-\Delta_p u_{\rho_i}=-\Delta_p u\quad&\mbox{in } B_{\rho_i},\\
u_{\rho_i}\le u&\mbox{on }\partial B_{\rho_i},
\end{cases}
$$ 
and so, by comparison \cite[Lemma 3.7]{FGK2006}, $u_{\rho_i}\le u$ in $B_{\rho_i}$. Since $u_{\rho_i}(x_0)=u(x_0)$, we have $\partial_\nu(u_{\rho_i}-u)(x_0)>0$, where $\nu$ is the external unit normal to $B_{\rho_i}$. This gives the first inequality in the statement, namely
$$
|\nabla u(x_0)|\ge\left(\frac{\rho_i}{N}\right)^{\frac{1}{p-1}}.
$$
On the other hand, let $\mathcal A:=\mathcal A(\rho_e,\mathrm{diam}(\Omega)+\rho_e)$ be the annulus centered at $c^+$. By definition, $\Omega\subset\mathcal A$. Again, without loss of generality, we can place the origin at $c^+$ and consider the function $u_\mathcal A$ whose expression is given by \eqref{u_annulus} with $R_1:=\rho_e$ and $R_2:=\mathrm{diam}(\Omega)+\rho_e$. Reasoning as above we have
$$
\begin{cases}
-\Delta_p u_{\mathcal A}=-\Delta_p u\quad&\mbox{in } \Omega,\\
u_{\mathcal A}\ge u&\mbox{on }\partial \Omega,
\end{cases}
$$
and so $u_\mathcal A\ge u$ in $\Omega$. Therefore, $\partial_{\nu}(u_\mathcal A-u)(x_0)\le 0$, being $\nu$ the external unit normal to $\mathcal A$. This finally gives
$$
|\nabla u(x_0)|\le \left(\frac{\bar R^N}{N\rho_e^{N-1}}-\frac{\rho_e}{N}\right)^{\frac{1}{p-1}}\le \left(\frac{(\rho_e+\mathrm{diam}(\Omega))^N}{N\rho_e^{N-1}}-\frac{\rho_e}{N}\right)^{\frac{1}{p-1}}
$$
and concludes the proof.
\end{proof}

Combining together the results in Proposition \ref{bdd_grad} and Theorem \ref{th:main}, we get the following corollary.

\begin{corollary}\label{I-H}
Let $1<p<2$ and $\Omega\subset\mathbb R^N$ be a $C^{2,\alpha}$ bounded domain. If $u$ solves \eqref{P} and has $|\mathcal C|=0$, the following chain of inequalities holds
\begin{equation}\label{IpH}
0\le \mathcal I_p(u)\le \frac{p(N-1)}{p-1}\left[\frac{(\mathrm{diam}(\Omega)+\rho_e)^N}{N\rho_e^{N-1}}-\frac{\rho_e}{N}\right]^{\frac{p}{p-1}}\|H_0-H\|_{L^1(\partial\Omega)}. 
\end{equation}
\end{corollary}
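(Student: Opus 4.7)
The plan is to assemble the inequality by direct combination of Theorem~\ref{th:main}(iii) and Proposition~\ref{bdd_grad}. The lower bound $0 \le \mathcal I_p(u)$ is immediately provided by Theorem~\ref{th:main}(i), so the work lies entirely on the upper bound.

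Starting from Theorem~\ref{th:main}(iii), I have
$$\mathcal I_p(u)\le \frac{p(N-1)}{p-1}\int_{\partial\Omega}|u_\nu|^p(H_0-H)\,d\sigma.$$
The first step is to replace the signed factor $(H_0-H)$ by its absolute value, which is legitimate because the integrand can only increase: $(H_0-H)\le |H_0-H|$ pointwise on $\partial\Omega$. This yields
$$\mathcal I_p(u)\le \frac{p(N-1)}{p-1}\int_{\partial\Omega}|u_\nu|^p\,|H_0-H|\,d\sigma.$$

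Next, I factor the gradient term out of the integral using an $L^\infty$ bound on $\partial\Omega$. Since $\Omega$ is of class $C^{2,\alpha}$, it satisfies the uniform interior and exterior touching sphere conditions (Definition~\ref{opt_rad}), so Proposition~\ref{bdd_grad} applies and gives
$$|u_\nu(x)|=|\nabla u(x)|\le \left[\frac{(\mathrm{diam}(\Omega)+\rho_e)^N}{N\rho_e^{N-1}}-\frac{\rho_e}{N}\right]^{\frac{1}{p-1}}\quad\text{for every }x\in\partial\Omega.$$
Raising to the $p$-th power and pulling the resulting constant outside the integral leaves $\int_{\partial\Omega}|H_0-H|\,d\sigma=\|H_0-H\|_{L^1(\partial\Omega)}$, which is exactly the right-hand side of \eqref{IpH}.

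Since each of the three steps — invoking Theorem~\ref{th:main}(i) and (iii), dominating by the absolute value, and applying the uniform gradient bound from Proposition~\ref{bdd_grad} — is a direct citation of an earlier result, there is no genuine obstacle to overcome. The only point that deserves a line of justification is that the hypotheses of Proposition~\ref{bdd_grad} (namely that $\Omega$ be $C^2$ with finite $\rho_e$) follow from the $C^{2,\alpha}$ regularity assumed here, as already noted immediately after Definition~\ref{opt_rad}.
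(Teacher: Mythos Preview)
Your proof is correct and follows exactly the approach indicated in the paper, which simply states that the corollary is obtained by combining Theorem~\ref{th:main} with Proposition~\ref{bdd_grad}. Your write-up actually spells out the details (passing to the absolute value and extracting the $L^\infty$ bound on $|u_\nu|^p$) that the paper leaves implicit.
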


\section{Some comments on the stability}\label{sec4}
With reference to the result given in Corollary~\ref{I-H}, we observe that, while $\mathcal I_p(u)$ is related to the solution of problem \eqref{P}, the constant that bounds from above $\mathcal I_p(u)$ in \eqref{IpH} depends only on the geometry of the problem. In particular, the non-negative integral $\mathcal I_p(u)$ that vanishes only on radial functions, goes to zero as $H\to H_0$ in $L^1(\partial\Omega)$. In view of Corollary~\ref{I-H}, this suggests, at least qualitatively, a sort of stability of the Serrin-type result for the overdetermined problem with the $p$-Laplacian.  

In \cite{CV2016}, Ciraolo and Vezzoni obtained the following stability result for the Soap Bubble Theorem by Alexandrov. 

\begin{theorem}[Theorem 1.1 of \cite{CV2016}]\label{th:CV}
Let $\partial\Omega$ be a $C^2$-regular, connected, and closed hypersurface embedded in $\mathbb R^N$. If 
$$
\|H-H_0\|_{L^\infty(\partial\Omega)}<\varepsilon
$$ 
for some $\varepsilon>0$ depending only on $N$, $|\partial\Omega|$, and upper bounds on the inverse of the optimal radius (cf. Definition \ref{opt_rad}) $\rho^{-1}$ of $\partial\Omega$, then 
$\partial\Omega\subset \bar B_{r_e}\setminus B_{r_i}$, with 
$$0<r_e-r_i\le C \varepsilon,$$
where $C>0$ depends on $N$, $|\partial\Omega|$, and upper bounds on the inverse of the optimal radius $\rho^{-1}$ of $\partial\Omega$. 
\end{theorem}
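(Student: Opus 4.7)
The plan is to prove Theorem~\ref{th:CV} by a quantitative implementation of Alexandrov's method of moving planes, following the strategy that Aftalion--Busca--Reichel used to obtain stability for Serrin's overdetermined problem. The qualitative proof of the Soap Bubble Theorem shows that for every direction $\omega\in S^{N-1}$, the reflection of $\partial\Omega$ across a suitable critical hyperplane perpendicular to $\omega$ coincides with $\partial\Omega$; the idea is to measure how much this symmetry fails in terms of $\varepsilon:=\|H-H_0\|_{L^\infty(\partial\Omega)}$.

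First I would set up the moving planes. Fix $\omega\in S^{N-1}$, let $T_\lambda^\omega=\{x\cdot\omega=\lambda\}$, and denote by $\Sigma_\lambda^\omega$ the cap $\{x\in\partial\Omega:x\cdot\omega>\lambda\}$. Define $\lambda^*(\omega)$ as the largest $\lambda$ such that the reflection across $T_\lambda^\omega$ of every cap $\Sigma_\mu^\omega$, $\mu\ge\lambda$, stays inside $\bar\Omega$. At $\lambda^*$ one of two configurations occurs: (Type~I) the reflected cap $\Sigma^*_{\lambda^*}$ touches $\partial\Omega$ at an interior point $p\notin T_{\lambda^*}^\omega$; or (Type~II) $T_{\lambda^*}^\omega$ meets $\partial\Omega$ orthogonally at some point $p\in\partial\Omega\cap T_{\lambda^*}^\omega$.

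Next I would run a quantitative comparison near $p$. In a neighbourhood $U$ of $p$, write $\partial\Omega$ as the graph of $u$ and $\Sigma^*_{\lambda^*}$ as the graph of $v$, with $u\ge v$. Both functions solve the prescribed mean curvature equation
\begin{equation*}
\mathrm{div}\Bigl(\frac{\nabla u}{\sqrt{1+|\nabla u|^2}}\Bigr)=(N-1)\,H\circ\Phi_u,
\end{equation*}
and similarly for $v$, where $|H-H_0|\le\varepsilon$. Subtracting and linearising, the function $w:=u-v\ge0$ satisfies a uniformly elliptic equation $Lw=f$ with $\|f\|_\infty\le C\varepsilon$ and with coefficients whose bounds depend only on $N$ and on $C^{2,\alpha}$ bounds for $\partial\Omega$; the latter are controlled by $|\partial\Omega|$ and by $\rho^{-1}$ via uniform interior/exterior touching-ball estimates. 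In Type~I, a quantitative Hopf boundary lemma (essentially a Harnack inequality for $w$) yields $\|w\|_{C^1(U')}\le C\varepsilon$ on a smaller neighbourhood; in Type~II, the tangency sits at a boundary point of the cap, so I need Serrin's corner lemma in quantitative form, estimating second normal derivatives of $w$. This local bound propagates to give, via a covering/connectedness argument on $\partial\Omega$, the global statement that $T_{\lambda^*(\omega)}^\omega$ is an $O(\varepsilon)$-approximate hyperplane of symmetry in Hausdorff distance.

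Finally I would assemble the directions. Applying the previous step to $\omega=e_1,\ldots,e_N$ gives $N$ approximate symmetry hyperplanes whose intersection is a single point $q\in\mathbb R^N$; the central symmetry about $q$ moves every point of $\partial\Omega$ by at most $C\varepsilon$. Combining this with Minkowski's identity, which fixes $H_0=|\partial\Omega|/(N|\Omega|)$ and hence the reference radius $R_0=1/H_0$ (compatibly with Theorem~\ref{SBT}), one concludes $B_{r_i}(q)\subset\Omega\subset B_{r_e}(q)$ with $r_e-r_i\le C\varepsilon$, where $C$ depends only on $N$, $|\partial\Omega|$ and an upper bound on $\rho^{-1}$. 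The main obstacle is the quantitative Serrin corner lemma in the Type~II configuration: the qualitative argument uses a Taylor expansion to second order at the corner and concludes via strict inequality, and turning that strict inequality into a linear-in-$\varepsilon$ estimate requires delicate uniform $C^{2,\alpha}$ control on both graphs at the orthogonal touching point, which is exactly where the dependence of the constants on $\rho^{-1}$ arises.
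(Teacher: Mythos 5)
First, note that the paper does not prove Theorem \ref{th:CV} at all: it is quoted verbatim as Theorem 1.1 of \cite{CV2016} and is used only as a point of comparison in the discussion of stability in Section \ref{sec4}. So there is no internal proof to measure your attempt against; what you have written is a plausible reconstruction of the strategy of Ciraolo and Vezzoni themselves, namely a quantitative version of Alexandrov's reflection argument applied to the prescribed mean curvature equation. As a reconstruction of that strategy your outline is in the right spirit, but as a proof it has two genuine gaps.

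The first is the one you yourself flag: the quantitative Hopf lemma in the Type I configuration, the quantitative Serrin corner lemma in the Type II configuration, and the propagation of the local $C^1$-smallness of $w=u-v$ from a neighbourhood of the touching point to the whole reflected cap constitute essentially the entire technical content of the theorem. Naming these lemmas is not proving them; the propagation step in particular is delicate because Harnack chains degenerate near the boundary of the cap, and this is precisely where the dependence of the constants on $\rho^{-1}$ and $|\partial\Omega|$ must be tracked. The second gap is in your final assembly and is an error rather than an omission: approximate symmetry with respect to the $N$ coordinate hyperplanes, and hence approximate central symmetry about their intersection point $q$, does \emph{not} imply closeness to a ball. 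An ellipsoid with axes along the coordinate directions (or a cube) is exactly symmetric about $N$ orthogonal hyperplanes and exactly centrally symmetric, yet $r_e-r_i$ is of the order of the diameter. To conclude $B_{r_i}(q)\subset\Omega\subset B_{r_e}(q)$ with $r_e-r_i\le C\varepsilon$ you must exploit the approximate reflection symmetry for \emph{every} direction $\omega\in S^{N-1}$, together with an argument showing that each critical hyperplane $T^{\omega}_{\lambda^*(\omega)}$ passes within $C\varepsilon$ of the fixed point $q$; only the uniformity in $\omega$ forces $\partial\Omega$ into a thin spherical shell. Minkowski's identity plays no role in that step: it only normalises $H_0$ and does not produce the inclusion.
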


This result gives an estimate of $r_e-r_i$ in terms of the $L^\infty(\partial\Omega)$-norm of $H-H_0$.

 Furthermore, as a consequence, for every $1<p<\infty$, it is possible to compare the solution $u$ of \eqref{P} with the radial solutions
$$
\begin{gathered}
u_e(x):=-\frac{p-1}{pN^{\frac1{p-1}}}\left(|x|^{\frac{p}{p-1}}-(r_e)^{\frac{p}{p-1}}\right)\quad\mbox{for every }x\in B_{r_e}\\
\mbox{ and }\\
u_i(x):=-\frac{p-1}{pN^{\frac1{p-1}}}\left(|x|^{\frac{p}{p-1}}-(r_i)^{\frac{p}{p-1}}\right)\quad\mbox{for every }x\in B_{r_i}
\end{gathered}
$$ 
of 
$$
\begin{cases}
-\Delta_p u_e=1\quad&\mbox{in }B_{r_e},\\
u_e=0&\mbox{on }\partial B_{r_e},
\end{cases}
\quad\mbox{and}\quad
\begin{cases}
-\Delta_p u_i=1\quad&\mbox{in }B_{r_i},\\
u_i=0&\mbox{on }\partial B_{r_i},
\end{cases}
$$
respectively. 
Indeed, by the weak comparison principle \cite[Lemma 3.7]{FGK2006}, we easily get 
$$
u\ge u_i\;\;\mbox{in }B_{r_i}\quad\mbox{and}\quad u\le u_e\;\;\mbox{in }\Omega,
$$
giving in particular the following estimate of $u$ in terms of the radial solutions $u_i$ and $u_e$ on the interior ball $B_{r_i}$ 
$$
-\frac{p-1}{pN^{\frac1{p-1}}}\left(|x|^{\frac{p}{p-1}}-(r_i)^{\frac{p}{p-1}}\right)\le u(x)\le -\frac{p-1}{pN^{\frac1{p-1}}}\left(|x|^{\frac{p}{p-1}}-(r_e)^{\frac{p}{p-1}}\right)\quad\mbox{in } B_{r_i}.
$$

It is quite challenging to obtain an estimate from below of $\mathcal I_p(u)$ in terms of some increasing function of $r_e-r_i$. This would allow to improve --at least in some relevant cases-- the stability result in Theorem \ref{th:CV}, getting a stability result in terms of the $L^1(\partial\Omega)$-norm, instead of the $L^1(\partial\Omega)$-norm, of $H-H_0$. This approach was proposed by Magnanini and Poggesi for the case $p=2$ in \cite{MP2016}, where the authors used in a very clever way the mean value property for harmonic functions. Nevertheless, their method works well only in the linear case and seems very difficult to generalize it to the case $p\neq 2$. Some other issues related to the stability of the symmetry result for the overdetermined $p$-Laplacian problem are treated in \cite{Colesanti}.

\section*{Acknowledgments}
\noindent The authors were supported by the INdAM - GNAMPA Project 2017 ``Regolarit\`a delle soluzioni viscose per equazioni a derivate parziali non lineari degeneri" and by University of Bologna. The first author acknowledges also the support of the project ``Ricerca Locale 2018 Linea B - Problemi non lineari'' from University of Turin.

\bibliographystyle{abbrv}
\bibliography{biblio}

\end{document}